\renewcommand{\emptyset}{\varnothing}
\renewcommand{\le}{\leqslant}
\renewcommand{\leq}{\leqslant}
\renewcommand{\ge}{\geqslant}
\renewcommand{\geq}{\geqslant}
\newcommand{\rd}{\,\mathrm{d}}
\newcommand{\mrd}{\mathrm{d}}
\newcommand{\phe}{\phantom{=}}
\newcommand{\tod}{\stackrel{\mathrm{d}\, }\to}
\newcommand{\real}{\mathbb{R}}
\newcommand{\nat}{\mathbb{N}}
\newcommand{\e}{\mathbb{E}}
\newcommand{\var}{\mathrm{Var}}
\newcommand{\cov}{\mathrm{Cov}}
\newcommand{\giv}{\!\mid\!}
\newcommand{\dnorm}{\mathcal{N}}
\newcommand{\bsc}{\boldsymbol{c}}
\newcommand{\bsx}{\boldsymbol{x}}
\newcommand{\bsy}{\boldsymbol{y}}
\newcommand{\bsz}{\boldsymbol{z}}
\newcommand{\bsone}{\boldsymbol{1}}
\newcommand{\umu}{\underline{\mu}}
\newcommand{\omu}{\overline{\mu}}
\newcommand{\ult}{\underline{\tau}}
\newcommand{\olt}{\overline{\tau}}
\newcommand{\osigsq}{{\overline\sigma}^2}
\newcommand{\tran}{\mathsf{T}}
\newtheorem{theorem}{Theorem}
\newtheorem{lemma}{Lemma}
\newtheorem{proposition}{Proposition}
\newtheorem{corollary}{Corollary}
\theoremstyle{definition}
\newtheorem{remark}{Remark}
\title{Mean dimension of radial basis functions}
\author{Christopher Hoyt\\Stanford University
\and 
Art B. Owen\\ Stanford University
}
\date{February 2023}
\begin{document}
\maketitle
\begin{abstract}
We show that generalized
multiquadric radial basis functions (RBFs) on $\real^d$ have a mean dimension that is $1+O(1/d)$
as $d\to\infty$ with an explicit bound for the implied constant, under moment conditions on
their inputs.  Under weaker moment conditions
the mean dimension still approaches $1$. As a consequence, these RBFs become 
essentially additive as their dimension increases.  
Gaussian RBFs by contrast can attain any
mean dimension between 1 and d.  We also find that  a test integrand due to Keister that has been influential in quasi-Monte Carlo theory has a mean dimension that oscillates between approximately 1 and approximately 2
as the nominal dimension $d$ increases.
\end{abstract}

\section{Introduction}

For high dimensional functions it is very useful to find
parameterizations in terms of some vectors
of the same dimension as the input space.
Two such parameterizations are ridge functions
$\phi(\bsx^\tran\theta)$ and radial basis functions (RBFs)
$\phi( \Vert\bsx-\bsc\Vert)$ for vectors $\bsc,\theta$
of the same dimension as $\bsx$ and appropriate functions
$\phi(\cdot)$.
In this paper we study RBFs.  We are interested in them
because of their connections to multiple numerical
problems of interest in statistics and other
mathematical sciences: 
interpolation, machine learning,
Gaussian process regression (kriging),  
and multidimensional integration.

There are some results based on concentration of measure wherein high dimensional Lipschitz functions become essentially constant as the dimension $d$ of their domain tends to infinity.  See for example \cite{dono:2000}.
In this paper we study the way in which some 
of these high
dimensional functions $\phi(\cdot)$ fluctuate around their 
nearly constant value.  Our main results are that certain RBFs must become essentially additive
as $d\to\infty$, while others are not so constrained.  Our techniques are based on
the functional ANOVA decomposition of
\cite{hoef:1948}, \cite{sobo:1969} and \cite{efro:stei:1981}. A function of $d$ independent
variables has $2^d-1$ non-trivial 
variance components $\sigma^2_u$ for nonempty
subsets $u$ of variables.  The mean dimension
is the weighted average of cardinalities $|u|$
with weights proportional to $\sigma^2_u$.  It can take 
values between $1$ and $d$.
A mean dimension near
one means that the function is nearly additive in a
least squares sense.

We find that some classic RBFs such as multiquadrics
\citep{hard:1971} have 
mean dimension $1+O(1/d)$ as the
dimension $d\to\infty$. If a function has mean dimension
$1+\epsilon$ then it has an additive approximation that
explains at least $1-\epsilon$ of its variance.
The well-known 
Gaussian RBF (that we define below) need not be of
low mean dimension.  We show that it can be
parameterized to attain any mean dimension in
the interval $(1,d)$ when the variables in it
have continuous distributions. This RBF is known
as the Gaussian RBF in machine learning, but
some other literatures call it the
squared exponential RBF.

To fix ideas, suppose that we have measured values
$f(\bsx_i)$ for $\bsx_i\in\real^d$ and $i=1,\dots,n$.
We seek an interpolant $\tilde f(\bsx)$ for
$\bsx\in\real^d$.  We might then use
$$
\tilde f(\bsx) = \sum_{i=1}^n\beta_i\phi(\Vert\bsx-\bsx_i\Vert)
$$
after solving $n$ equations in $n$ unknowns
to compute $\beta=(\beta_1,\dots,\beta_n)^\tran\in\real^n$.
Only certain special functions $\phi$ are
good choices for this usage.
We describe some of those in a later section
based on material from \cite{fass:2007}.
For now we mention generalized multiquadrics
$\phi(\Vert\bsx\Vert)=(a+\Vert\bsx\Vert\vartheta^2)^p$ 
and Gaussians, $\phi(\Vert\bsx\Vert)=\exp(-\Vert\bsx\Vert^2\vartheta^2)$
for parameters $p\in\real\setminus\nat_0$ and $a\ge0$
and $\vartheta>0$.

Now suppose that $f(\bsx)$ is not nearly additive but
all of the $\phi(\Vert\bsx-\bsx_i\Vert)$ are nearly additive.
It would still be possible to interpolate if the
$\beta_i$ took values of large magnitude with
opposite signs that mostly
cancelled the additive parts in $\phi(\cdot)$.
We would however expect serious numerical
conditioning difficulties in that setting.
RBF approximation is often ill-conditioned
even with functions that are not nearly additive.
Fitting a non-additive function by nearly additive
basis functions can only make things worse.

The covariance functions used in Gaussian process regression
often take the RBF form, especially in geoscience.
An additive covariance function implies additive
realizations of the random field, a potentially
serious limitation.  This may be why 
covariances of product form are more popular than covariance
models of the RBF form in high dimensional Gaussian
process models such as those used in computer
experiments \citep{sack:welc:mitc:wynn:1989}.

An important test function for quasi-Monte Carlo (QMC)
integration is the Keister function from \cite{keis:1996}.
This is a radial basis function.  Although it is expressed
as a sinusoidal function of $\Vert\bsx\Vert^2$ for Gaussian
$\bsx$, making all $d$ variable equally
important, we will see that it is generally of low mean dimension.

The asymptotic mean dimension of ridge
functions was studied in \cite{hoyt:owen:2020} for 
$\bsx\sim\dnorm(0,I)$.  If the ridge
function $\phi(\cdot)$ is Lipshitz
continuous, then the mean dimension
of $f(\bsx)=\phi(\bsx^\tran\theta)$
for a unit vector $\theta\in\real^d$
remains bounded as the nominal dimension $d\to\infty$.
Some discontinuous ridge functions can
have mean dimensions that grow proportionally to $\sqrt{d}$.
A form of conditional QMC known as
pre-integration (see \cite{grie:kuo:leov:sloa:2018})
can convert them into Lipshitz
continuous ridge functions, greatly
reducing their asymptotic mean dimension, which then makes them easier
to integrate numerically

An outline of this paper is as follows.
Section~\ref{sec:notation} introduces our notation,
gives some properties of RBFs, and presents the
functional ANOVA and related material for mean dimension.
Section~\ref{sec:meandimmultiq} shows that, under some moment conditions, generalized
multiquadric RBFs have mean dimension $1+O(1/d)$ as $d\to\infty$ with an
explicit upper bound on the implied constant
in the $O(1/d)$ term.
Under much weaker moment conditions, the mean dimension still
approaches $1$ as $d\to\infty$.
Section~\ref{sec:gaussian} shows that the Gaussian RBFs
can attain any mean dimension in the interval $(1,d)$ when the inputs have continuous
distributions with bounded densities having
support near $\bsc$.
Section~\ref{sec:keister} shows that the
mean dimension of the Keister function
oscillates between nearly $1$ and nearly $2$ as the nominal dimension $d$ increases.
Section~\ref{sec:conclusions}
discusses how mean dimension varies among alternative
methods. 
Finally, there are appendices for the
lengthier proofs.

\section{Notation and elementary results}\label{sec:notation}

We study functions $f:\real^d\to\real$.
The argument to $f$ is denoted by 
$\bsx=(x_1,\dots,x_d)$.  The components of $\bsx$
are independent random variables.
We use $\bsx'$ to denote another variable with
the same distribution as $\bsx$, which is independent
of $\bsx$.
We will use hybrid points $\bsx_{-j}{:}\bsx'_j\in\real^d$
that combine inputs from both $\bsx$ and $\bsx'$.
If $\bsy=\bsx_{-j}{:}\bsx'_j$
then
$y_j=x'_j$ and $y_\ell=x_\ell$ for $\ell\ne j$.
We use $[d]$ to denote the set $\{1,2,\dots,d\}$.
For $u\subseteq[d]$ we use $|u|$ for the cardinality
of $u$. 
The point $\bsx_u\in\real^{|u|}$ is comprised of $x_j$ for
$j\in u$. The complement $[d]\setminus u$ is denoted by $-u$ and $\bsx_{-u}$ consists of
those $x_j$ with $j\not\in u$.

\subsection{Radial basis functions}

The description here is based on \cite{fass:2007}.
Radial basis functions are used for scattered data interpolation,
also known as mesh-free approximation, meaning that the sample
points are not necessarily in a regular structure like a grid.
One strong motivation for them is that polynomial interpolation
is not necessarily well defined for an arbitrary
set of points $\bsx_i\in\real^d$ for $d\ge2$ but some RBFs can interpolate at any distinct points. \cite{fass:2007} considers complex valued interpolations
but we consider only real valued functions here.

The RBF interpolant is of the form
$\sum_{i=1}^n\beta_i\phi(\Vert\bsx-\bsx_i\Vert)$.
\cite{fass:2007} considers also the more general
form
$$
\sum_{i=1}^n\beta_i\tilde\phi(\bsx-\bsx_i)
$$
where $\tilde\phi(\cdot)$ is not necessarily
`radial', i.e., not necessarily a function
of the norm of its argument.
To interpolate in this more general setting
we must solve 
\begin{align}\label{eq:tosolve}
K\beta = y
\end{align}
for $\beta\in\real^n$,
where $K\in\real^{n\times n}$ has $ij$ entry $\tilde\phi(\bsx_i-\bsx_j)$ and $y\in\real^n$ has $i$'th entry $f(\bsx_i)$.
The function $\tilde \phi$ is `radial' if $\tilde\phi(\bsx_i-\bsx_j)=\phi(\Vert\bsx_i-\bsx_j\Vert)$ for a function
$\phi:[0,\infty)\to\real$.
The function $\tilde\phi$ is called positive definite if $K$ is always positive semi-definite
for any $n\ge1$
and any distinct points $\bsx_1,\dots,\bsx_n\in\real^d$.  If this $K$ is always positive definite then $\tilde\phi$ is strictly positive
definite.

Strictly positive definite functions can be used to interpolate
any values $f(\bsx_i)$ at distinct $\bsx_i$.  \citet[Chapter 8]{fass:2007}
describes conditionally positive definite functions of order $m$
that can be used to interpolate functions that are orthogonal
to all multivariate polynomials of order less than or equal to $m-1$.
To use them, one interpolates with a suitable polynomial plus
a conditionally positive definite RBF.

\citet[Chapter 3]{fass:2007} provides numerous properties
and characterizations of positive definite functions
and strictly positive definite functions.  If $\tilde\phi$ is 
positive definite then $|\tilde\phi(\bsx)|\le \tilde\phi(0)$.
A real valued continuous and positive definite function
must be even.

Our main interest here is in (strictly) positive definite
radial functions. If $\tilde\phi(\cdot)=\phi(\Vert\cdot\Vert)$ 
is (strictly) positive definite for dimension $d$
then the same holds (strictly or not) for all dimensions $d'\le d$.
Because we want to study the limit as $d\to\infty$ we
are interested in $\phi$ that provide strictly positive definite
functions for all $d\ge1$.
By Theorem 3.8 of \cite{fass:2007}, due to Schoenberg, the function
$\phi:[0,\infty)\to\real$ with
\begin{align}\label{eq:shoenberg}
\phi(r) = \int_0^\infty e^{-r^2t^2}\mu(\mrd t)
\end{align}
provides a strictly positive definite radial function
for all dimensions $d\ge1$
if and only if $\mu$ is a finite positive Borel measure not
concentrated on $\{0\}$.    It follows that these desirable
functions $\phi$ can take no negative values, must be
strictly decreasing, and cannot have compact support.

It is clear from~\eqref{eq:shoenberg} that the
Gaussian RBF $\phi(r)=e^{-r^2\vartheta^2}$ is a strictly
positive definite radial function for $\vartheta>0$
in all dimensions $d\ge1$.
So are generalized inverse multiquadrics
$$
(1+\Vert\bsx\Vert^2)^{p},\quad p<0.
$$
\citep[p 42]{fass:2007}.

Generalized multiquadrics
$(-1)^{\lceil p\rceil}(1+\Vert\bsx\Vert^2)^p$ for 
noninteger $p>0$ are strictly conditionally positive
definite of order $m\ge\lceil p\rceil$.
See \citet[Chapter 8.1]{fass:2007}.
The functions 
$(-1)^{\lceil p/2\rceil}\Vert\bsx\Vert^p$ for $p>0$ with
$p$ not an even integer are conditionally positive definite
of order $m\ge\lceil p/2\rceil$ \citep[Chapter 2.1]{fass:2007}.
By Theorem 9.7 of \cite{fass:2007}, the function
$\phi(\Vert\bsx\Vert)=\Vert\bsx\Vert$ while not strictly
positive definite can be used for interpolation because
the resulting matrix $K$ has one negative and $n-1$ positive
eigenvalues.

Table 3.1 of \cite{fass:mcco:2015} names some of the
more widely used generalized multiquadric RBFs $\phi(r)$:
\begin{align*}
(1+\vartheta^2 r^2)^{-1}&\quad\text{Inverse quadratic}\\
(1+\vartheta^2 r^2)^{-1/2}&\quad\text{Inverse multiquadric}\\
(1+\vartheta^2 r^2)^{1/2}&\quad\text{Multiquadric},
\end{align*}
with a parameter $\vartheta>0$.  The last one is the one
that \cite{hard:1971} uses.

\subsection{ANOVA and Sobol' indices}
We use the standard analysis of variance (ANOVA)
decomposition of $f\in L^2(\real^d)$ from \citep{hoef:1948,sobo:1969,efro:stei:1981}.
This decomposition writes
$$
f(\bsx) = \sum_{u\subseteq[d]}f_u(\bsx)
$$
where the ANOVA effect $f_u$ is a function that
only depends on $\bsx$ through components $x_j$
for $j\in u$. In this decomposition, 
$\e(f_u(\bsx)f_v(\bsx))=0$ for $u\ne v$
and $f_\emptyset$ is the constant function everywhere
equal to $\e(f(\bsx))$.
The quantities
$$
\sigma^2_u = \var( f_u(\bsx))=
\begin{cases}
\e( f_u(\bsx)^2), & u\ne\emptyset\\
0, & u=\emptyset
\end{cases}
$$
are known as the variance components of $f$.
They satisfy $\sigma^2=\sum_{u\subseteq[d]}\sigma^2_u$,
where $\sigma^2=\var(f(\bsx))$.

The unnormalized Sobol' indices of $f$ 
for $u\subseteq[d]$ are
$$
\ult^2_u = \sum_{v\subseteq u}\sigma^2_u
\quad\text{and}\quad
\olt^2_u = \sum_{v:v\cap u\ne\emptyset}\sigma^2_u,
$$
respectively.  Normalized versions 
$\ult^2_u/\sigma^2$ and $\olt^2_u/\sigma^2$
are widely used in global sensitivity
analysis.  See \cite{raza:etal:2021} for context and
an extensive bibliography.
We will use the identity
\begin{align}\label{eq:econdvar}
\olt^2_u = \e( \var( f(\bsx)\giv \bsx_{-u})).
\end{align}
Our greatest need is for $\olt^2_{\{j\}}$ which
we abbreviate to $\olt^2_j$.

When $0<\sigma^2<\infty$, we define the mean dimension of $f$ as 
$$
\nu(f) = \frac1{\sigma^2}\sum_{u\subseteq[d]}\sigma^2_u.
$$
The closest additive function to $f$ in mean square is
$$f_{\mathrm{add}}(\bsx)
=f_\emptyset(\bsx)+\sum_{j=1}^df_{\{j\}}(\bsx).$$
If $\nu(f)$ is close to one
then $f$ is nearly additive in an $L^2$ sense.
More precisely
$$
\nu(f)\le 1+\epsilon \implies
\frac{\var( f(\bsx)-f_{\mathrm{add}}(\bsx))}
{\var(f(\bsx))}\le \epsilon.
$$

An elementary result from \cite{meandim} is that
\begin{align}\label{eq:meandimfromsobol}
\nu(f) = \frac1{\sigma^2}\sum_{j=1}^d\olt^2_j.
\end{align}
\cite{jans:1999} has a useful identity
\begin{align}\label{eq:jansens}
\olt^2_j
=\frac12\e\bigl(
(f(\bsx_{-j}{:}\bsx'_j)-f(\bsx))^2
\bigr)
\end{align}
that allows sampling based estimates of
$\olt^2_j$. This identity underlies our
theoretical analysis along with the more familiar
identity $\sigma^2=\e( (f(\bsx)-f(\bsx'))^2)/2$.

\section{Generalized multiquadrics}\label{sec:meandimmultiq}

These functions take the form
$(a+\vartheta\Vert\bsx\Vert^2)^p$.
We can rewrite them as
$(a+\Vert\bsx\Vert^2)^p$ after
replacing $a$ by $a/\vartheta$ and
rescaling the coefficients $\beta_i$ by a factor of $\vartheta^p$.
The cases that interest us most have
nonzero $p<1$ because those get the most use.
The case $p=1$ is obviously of mean dimension
one.  
We will include cases with $a=0$ and $p<0$.
As \cite{fass:2007} notes, these are not well suited to interpolation
due to their singularities but they are of interest as
generalized Coulomb potentials.

\subsection{Parametrization of generalized
multiquadrics}
A radial basis function uses the inputs
$\bsx$ only through $\sum_{j=1}^d(x_j-c_j)^2$.
Here $x_j$ is the $j$'th component of $\bsx$ and
$c_j$ is the $j$'th component of a center point such as $\bsx_i$.
We let
$$
z_j = 
\begin{cases}
a + (x_1-c_1)^2, & j=1\\
(x_j-c_j)^2,& \text{else}  
\end{cases}
$$
and then we study mean dimension in terms
of random $\bsz = (z_1,\dots,z_d)$.
We have folded any $a>0$ into $z_1$ to remove $a$ from further
expressions.  The case of $a=0$ is the most challenging
because it can produce a singularity at $\bsz=0$ that we don't have
to consider when $a>0$.

The radial basis functions we study are functions
of $\bsz$ where $\bsz$ is defined componentwise
from $\bsx$.  If we use $f^*$ to represent
the radial basis function in terms of $\bsz$
then we find the same mean and variance and variance
components and mean dimension for $f^*$
as we get for $f$.
For simplicity, we will use $f$ also for the radial
basis function written in terms of $\bsz\in[0,\infty)^d$.
We retain the distinction between $\bsx$ and $\bsz$
because that makes our input assumptions
easier to interpret. 
We will study the mean dimension of $\bigl(\sum_{j=1}^dz_j\bigr)^p$
for independent not necessarily identically distributed random $z_j\ge0$ and nonzero $p\le1$.

\subsection{Assumptions on $\bsz$}
We study a collection of independent nonnegative random
variables $z_j$ for $j=1,\dots,d$.
We write $\mu_j=\e(z_j)$ and $\sigma^2_j =\var(z_j)$.
Some higher moments are denoted by $\mu^{(k)}_j=\e( (z_j-\mu_j)^k$
for positive integers $k$.
For certain sums we write
$$
z_{1:d} :=\sum_{j=1}^dz_j,\quad\mu_{1:d}:=\sum_{j=1}^d\mu_j,
\quad\sigma^2_{1:d} := \sum_{j=1}^d\sigma^2_j
\quad\text{and}\quad \mu^{(k)}_{1:d} := \sum_{j=1}^d\mu^{(k)}_j.
$$
We want to bound the mean dimension of $(z_{1:d})^p$.
It is convenient to define
\begin{align}\label{eq:ourf}
f(\bsz) = \Bigl(\frac{z_{1:d}}{\mu_{1:d}}\Bigr)^p.
\end{align}
This function of $\bsz$ has the same mean dimension as if 
we had not scaled the input by $\mu_{1:d}$ and it has
the same mean dimension as the original function of $\bsx$.

We will use a bounded mean assumption
\begin{align}\label{eq:boundedmean}
0<\umu \le \mu_j \le \omu <\infty, \quad 1\le j\le d
\end{align}
and a bounded variance assumption
\begin{align}\label{eq:boundedvar}
0<\sigma^2_j \le \osigsq <\infty, \quad 1\le j\le d
\end{align}
and for some $\alpha>0$, a negative moment assumption
\begin{align}\label{eq:negmoment}
\e( z_j^{-\alpha})\le M_\alpha<\infty, \quad 1\le j\le d.
\end{align}
For some of our sharper result we will require  that 
\begin{align}\label{eq:sixmomentbounds}
|\e( (z_j-\mu_j)^k)|\le \lambda\quad\text{for $2\le k\le 6$}
\end{align}
holds for some $\lambda <\infty$.

We do not lose much generality requiring $\sigma^2_j>0$
because $\sigma^2_j=0$ implies that $z_j$ is redundant.
Our main results will still hold 
if some $\sigma^2_j=0$ so long as $\sigma^2_{1:d}>0$.

One very important case has $x_j\sim\dnorm(0,1)$.
Then $z_j$ has a noncentral chi-squared distribution
with one degree of freedom and noncentrality parameter $c_j^2$.
This distribution satisfies the bounded mean and variance
assumptions provided that $c_j^2$ is bounded.
It satisfies the negative moment assumption if $\alpha<2$
because the central $\chi^2_{(1)}$ satisfies that condition
and the noncentral distribution is a mixture of
central $\chi^2$ distributions with odd numbers of
degrees of freedom.
For the case with finite $a>0$, $z_1$ satisfies these three conditions if
$(x_1-c_1)^2$ does.

\subsection{Main result for generalized multiquadrics}

Here we present our main result for mean
dimension of generalized multiquadric RBFs.
We give moment conditions on $z_j$ under
which 
\begin{align}\label{eq:nubound}
\nu(f) \le 1+\frac{(p-1)^2}2\frac{\sigma^2_{1:d}}{(\mu_{1:d})^2}+O\Bigl(\frac1{d^2}\Bigr).
\end{align}
Most of the proof details are
in Appendix~\ref{sec:rates}.
We assume throughout that independent $z_j\ge0$
satisfy the sixth moment condition~\eqref{eq:sixmomentbounds},
the mean condition~\eqref{eq:boundedmean}
and the negative moment condition
\eqref{eq:negmoment}.
The results in Appendix~\ref{sec:rates} depend on some results in Appendix~\ref{sec:summoments} about
positive and negative moments of sums of~$z_j$.

The mean dimension equals
$\sum_{j=1}^d\olt^2_j/\sigma^2$, so
we get asymptotic expressions for the numerator
and denominator of this ratio.
For the denominator,
Proposition~\ref{prop:pmomentwith6} 
in Appendix~\ref{sec:rates} shows
that
$\e( ({z_{1:p}}/{\mu_{1:p}})^p)$ equals 
$$1
+ \biggl( \frac{ (p)_2 }{2!} \cdot \frac{ \sigma^2_{1:d} }{ ( \mu_{1:d})^2} \biggr) 
+ \biggl( \frac{ (p)_3 }{3!} \cdot \frac{ \mu^{(3)}_{1:d} }{ (\mu_{1:d})^3 } 
+ \frac{ (p)_4 }{4!} \cdot \frac{3 (\sigma^2_{1:d})^2 }{ (\mu_{1:d})^4 } \biggr)
+ O(d^{-3})
$$
for $p<6$ as $d\to\infty$. Here
$(p)_k=p(p-1)\cdots(p-k+1)$.
Using this result for first and
second moments of $(z_{1:d}/\mu_{1:d})^p$
for $p<1$,
Corollary~\ref{cor:varfforrate}
shows that 
$\sigma^2= \var\left( \left( {z_{1:d}}/{\mu_{1:d}} \right)^p \right)$
equals
$$\frac{ p^2 \sigma_{1:d}^2 }{ (\mu_{1:d})^2 } \Bigl( 
	1
	+ (p-1) \cdot \frac{ \mu^{(3)}_{1:d} }{ \sigma^2_{1:d} \mu_{1:d} } 
	+ \frac{1}{2} (p-1)(3p-5) \cdot \frac{\sigma^2_{1:d}}{(\mu_{1:d})^2} 
	+ O(d^{-2}) \Bigr). 
$$
For the numerator, Proposition~\ref{prop:upperboundsumtausq}
shows that 
$$ \sum_{j=1}^d \olt_j^2 
\leq \frac{p^2 \sigma_{1:d}^2}{(\mu_{1:d})^2} \Bigl( 1 + (p-1)(2p-3) \frac{\sigma_{1:d}^2}{(\mu_{1:d})^2}
+ (p-1) \frac{ \mu^{(3)}_{1:d}}{ \mu_{1:d} \sigma_{1:d}^2 } + O(d^{-2}) \Bigr).$$

\begin{theorem}\label{thm:asyboundonnu}
Let $z_j\ge0$ be independent random variables satisfying
the sixth moment condition~\eqref{eq:sixmomentbounds}
and the mean condition~\eqref{eq:boundedmean}.
Let $f(\bsz)=(z_{1:d}/\mu_{1:d})^p$ for nonzero
$p<1$. Then
$$
\nu(f) \le 1+\frac{(p-1)^2}2\frac{\sigma^2_{1:d}}{(\mu_{1:d})^2}+O\Bigl(\frac1{d^2}\Bigr)
$$
as $d\to\infty$.
\end{theorem}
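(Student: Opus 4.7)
The plan is to form the ratio $\nu(f) = \sigma^{-2}\sum_{j=1}^d \olt^2_j$ given by \eqref{eq:meandimfromsobol}, using the exact variance expansion from Corollary~\ref{cor:varfforrate} as the denominator and the upper bound on $\sum_{j=1}^d \olt^2_j$ from Proposition~\ref{prop:upperboundsumtausq} as the numerator. Both expansions share the common prefactor $p^2\sigma^2_{1:d}/(\mu_{1:d})^2$, which cancels, so the problem reduces to analyzing the ratio of two quantities of the form $1+O(1/d)+O(d^{-2})$. Under \eqref{eq:boundedmean} and \eqref{eq:sixmomentbounds}, we have $\mu_{1:d}=\Theta(d)$, $\sigma^2_{1:d}=O(d)$, and $|\mu^{(3)}_{1:d}|=O(d)$, so the correction terms $\sigma^2_{1:d}/(\mu_{1:d})^2$ and $\mu^{(3)}_{1:d}/(\mu_{1:d}\sigma^2_{1:d})$ are both of order $1/d$. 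A single geometric step then gives
$$\frac{1+a_1+O(d^{-2})}{1+b_1+O(d^{-2})} \;=\; 1 + (a_1-b_1) + O(d^{-2}),$$
since $a_1 b_1 = O(d^{-2})$ and the denominator is bounded away from zero. The direction of the inequality is preserved because the denominator is exact while the numerator is an upper bound.

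Next I would extract and simplify $a_1 - b_1$. The $(p-1)\mu^{(3)}_{1:d}/(\mu_{1:d}\sigma^2_{1:d})$ contributions in $a_1$ and $b_1$ carry identical coefficient $(p-1)$ and cancel exactly; this cancellation is the reason the third central moment does not appear in the final bound. The remaining coefficient of $\sigma^2_{1:d}/(\mu_{1:d})^2$ is
$$(p-1)(2p-3) \;-\; \tfrac12(p-1)(3p-5) \;=\; (p-1)\cdot \tfrac12\bigl((4p-6)-(3p-5)\bigr) \;=\; \tfrac12(p-1)^2,$$
which is precisely the constant stated in the theorem.

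The real effort is already packaged into the appendices, so I do not expect a substantial obstacle at the level of this theorem itself. The delicate step, and the reason for the sixth moment condition \eqref{eq:sixmomentbounds}, is in establishing Proposition~\ref{prop:upperboundsumtausq} and Corollary~\ref{cor:varfforrate} with $O(d^{-2})$ remainders whose implicit constants depend only on $\umu$, $\omu$, $\lambda$ and $p$; this in turn requires expanding $\e(f)$ and $\e(f^2)$, and each Jansen identity \eqref{eq:jansens}, to sufficiently high order. Given those expansions, the only check that remains at the present stage is that the cross term $a_1 b_1$ is uniformly $O(d^{-2})$, which follows immediately from the $O(1/d)$ size estimates for $a_1$ and $b_1$ above.
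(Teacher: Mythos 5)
Your proposal is correct and follows essentially the same route as the paper: the paper's proof likewise forms $\nu(f)$ as the ratio of the bound in Proposition~\ref{prop:upperboundsumtausq} to the expansion in Corollary~\ref{cor:varfforrate}, cancels the common factor $p^2\sigma^2_{1:d}/(\mu_{1:d})^2$, lets the $(p-1)\mu^{(3)}_{1:d}/(\mu_{1:d}\sigma^2_{1:d})$ terms cancel, and reduces the remaining coefficient $(p-1)(2p-3)-\tfrac12(p-1)(3p-5)$ to $\tfrac12(p-1)^2$. Your explicit geometric-series step and the size estimates for the correction terms simply spell out the algebra the paper leaves implicit.
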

\begin{proof}
Combining the upper bound from Proposition~\ref{prop:upperboundsumtausq}
with the asymptotic variance in Corollary~\ref{cor:varfforrate}
we get
\begin{align*}
\nu(f) &\le
\frac{
1+(p-1)(2p-3)\frac{\sigma^2_{1:d}}{(\mu_{1:d})^2}
+(p-1)\frac{\mu_{1:d}^{(3)}}{\mu_{1:d}\sigma^2_{1:d}}
+O(d^{-2})}
{1 
+ \frac{1}{2} (p-1)(3p-5) \frac{\sigma^2_{1:d}}{(\mu_{1:d})^2} 	+ (p-1) \frac{ \mu^{(3)}_{1:d} }{ \sigma^2_{1:d} \mu_{1:d} }+ O(d^{-2}) 
}\\
&=1 +\frac{(p-1)^2}2\frac{\sigma^2_{1:d}}{(\mu_{1:d})^2}
+O\Bigl( \frac1{d^2}\Bigr). \qedhere
\end{align*}
\end{proof}

\begin{remark}
Under the assumptions we have made,
$\sigma^2_{1:d}/(\mu_{1:d})^2=\Theta(1/d)$.
\end{remark}
\begin{remark}
We notice that the bound in Theorem~\ref{thm:asyboundonnu} can
be evaluated for the degenerate case $p=0$.
We conjecture that this might be the rate for
$f(\bsz)=\log( z_{1:d})$.
Our reasoning is that the mean dimension of
$(z_{1:d})^p$ is the same as that of
$((z_{1:d})^p-1)/p$ which approaches $\log(z_{1:d})$
as $p\to0$.
\end{remark}

\subsection{Weaker conditions}

Theorem~\ref{thm:asyboundonnu}
relies on a sixth moment assumption in order to get an
expression for the coefficient
of $1/d$ in the bound on $\nu(f)$.
This section shows that the mean dimension of
generalized multiquadric RBFs tends to $1
$ as $d\to\infty$
under very mild moment conditions:
means and variances of $z_j$  bounded
uniformly from $0$ and $\infty$ and
a finite negative moment.
Under these conditions, Lemmas~\ref{lem:sobindexupper}
and \ref{lem:varlowerbd}
in Appendix~\ref{sec:boundednu}
show that
$$ 
\limsup_{d \rightarrow \infty} \frac{ \sum_{j=1}^d \olt_j^2 }{ p^2 \cdot \frac{\sigma_{1:d}^2 }{ (\mu_{1:d})^2 } }
\leq 1
\quad\text{and}\quad
 \liminf_{d \rightarrow \infty} 
 \frac{\var(f(\bsz))}{p^2 \cdot \frac{\sigma_{1:d}^2}{ (\mu_{1:d})^2 } }\geq 1$$
 respectively.

\begin{theorem}\label{thm:limisone}
Let independent random $z_j\ge0$ for $j=1,\dots,d$ satisfy the
mean bounds~\eqref{eq:boundedmean},
the variance bounds~\eqref{eq:boundedvar} and the negative moment condition~\eqref{eq:negmoment}.
Let $f(\bsz)=(z_{1:d}/\mu_{1:d})^p$ for non-zero $p< 1$.
Then the mean dimension of $f$ satisfies
$$
\lim_{d\to\infty}\nu(f) = 1.
$$
\end{theorem}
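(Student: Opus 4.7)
The plan is to exploit the two lemmas cited immediately before the theorem statement, which provide asymptotic control over the numerator and denominator of the mean dimension separately after a common normalization.

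First I would recall the identity $\nu(f) = \sum_{j=1}^d \olt_j^2/\sigma^2$ from~\eqref{eq:meandimfromsobol}. Under the assumed bounds~\eqref{eq:boundedmean} and~\eqref{eq:boundedvar}, the quantity
$$D_d := p^2 \cdot \frac{\sigma_{1:d}^2}{(\mu_{1:d})^2}$$
is strictly positive for each $d$ and of order $\Theta(1/d)$. Writing
$$\nu(f) = \frac{\sum_{j=1}^d \olt_j^2 / D_d}{\var(f(\bsz))/D_d}$$
and applying the limsup bound on the numerator and the liminf bound on the denominator supplied by the two lemmas, I obtain $\limsup_{d \to \infty} \nu(f) \le 1$.

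For the matching lower bound, I would invoke the general fact that the mean dimension of any non-constant $L^2$ function is at least one. Indeed $\sigma^2 = \sum_{u \ne \emptyset} \sigma_u^2$ while $\sum_{j=1}^d \olt_j^2 = \sum_{u \subseteq [d]} |u|\,\sigma_u^2$, so each nonempty $u$ contributes $|u| \ge 1$ copies of $\sigma_u^2$ to the numerator but only one to the denominator. Hence $\nu(f) \ge 1$ for every $d$, and combining this with the limsup bound gives $\lim_{d \to \infty} \nu(f) = 1$.

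The substantive work therefore lies in the two appendix lemmas rather than in the theorem itself. I expect the liminf bound on $\var(f(\bsz))$ to be the more delicate of the two: without the sixth moment assumption one cannot Taylor expand $(z_{1:d}/\mu_{1:d})^p$ about $1$ as was done in Corollary~\ref{cor:varfforrate}, and the negative moment hypothesis~\eqref{eq:negmoment} must be leveraged (chiefly when $p < 0$) to control the integrand where $z_{1:d}$ is atypically small. The upper bound on $\sum_j \olt_j^2$ should follow more directly from the identity~\eqref{eq:jansens} applied to $f$ together with elementary inequalities on $(\cdot)^p$.
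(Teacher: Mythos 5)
Your argument is correct and is essentially the paper's own proof: the paper likewise notes $\nu(f)\ge 1$ by definition and then bounds $\nu(f)$ above by the ratio of the limsup from Lemma~\ref{lem:sobindexupper} to the liminf from Lemma~\ref{lem:varlowerbd}, both normalized by $p^2\sigma_{1:d}^2/(\mu_{1:d})^2$. Your explicit use of $\limsup$ for the ratio (rather than asserting the limit outright) and your justification of $\nu(f)\ge1$ via $\sum_j\olt_j^2=\sum_u|u|\sigma_u^2$ are minor refinements, not a different route.
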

\begin{proof}
By definition $\nu(f)\ge1$. Next
\begin{align*}
\lim_{d\to\infty}\nu(f) &=
\lim_{d\to\infty}\frac{\sum_{j=1}^d\olt^2_j}{\var(f(\bsz))}
\le \frac{\limsup_{d\to\infty} \sum_{j=1}^d\olt^2_j
/[p^2\sigma^2_{1:d}/(\mu_{1:d})^2]}
{\liminf_{d\to\infty}\var(f(\bsz))/[p^2\sigma^2_{1:d}/(\mu_{1:d})^2]}
\end{align*}
which equals 1 by Lemmas~\ref{lem:sobindexupper} and~\ref{lem:varlowerbd}.
\end{proof}

\section{The Gaussian RBF}\label{sec:gaussian}
Here we show how the Gaussian RBF is not limited to low mean dimension as
$d\to\infty$ because the scale parameter can be chosen to control
mean dimension.  This makes it very different from multiquadric and
related RBFs where the asymptotic mean dimension must converge to one.
The Gaussian RBF is special in that it 
can be parameterized as a product
$$
f(\bsx) = \prod_{j=1}^d\exp(-(x_j-c_j)^2/\vartheta^2)
$$
for $\vartheta>0$.  
We have changed the scaling from $(x_j-c_j)^2\vartheta^2$ to $(x_j-c_j)/\vartheta^2$
to give $\vartheta^2$ an interpretation as
twice the variance of a Gaussian random variable.
We assume that $x_j$ are independent
with  a continuous
distribution.  Without loss of generality
we assume that $x_j$ have mean zero.

We use three propositions.
The product form of the Gaussian
RBF allows for a simplification
of the mean dimension.  Proposition~\ref{prop:meandimprod} below applies
to general products, not just Gaussian RBFs.

\begin{proposition}\label{prop:meandimprod}
Let $f(\bsx)=\prod_{j=1}^dg_j(x_j)$ where $x_j$ are independent
random variables with $\var(g_j(x_j))<\infty$ and $\min_{1\le j\le d}\var(g_j(x_j))>0$.  Then $f$ has mean dimension
\begin{align}\label{eq:meandimfromlambda}
\nu(f)=\frac{\sum_{j=1}^d\rho_j}{1-\prod_{j=1}^d(1-\rho_j)}
\end{align}
where
$$
\rho_j = \frac{\var(z_j)}{\e(z_j^2)}\in[0,1].
$$
\end{proposition}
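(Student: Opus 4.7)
The plan is to exploit the product structure directly by writing out an explicit ANOVA decomposition. With $z_j = g_j(x_j)$, $\mu_j = \e(z_j)$, and $\sigma^2_j = \var(z_j)$, I would expand
$$
f(\bsx) = \prod_{j=1}^d z_j = \prod_{j=1}^d \bigl(\mu_j + (z_j-\mu_j)\bigr) = \sum_{u\subseteq[d]} \Bigl(\prod_{j\in u}(z_j-\mu_j)\Bigr)\Bigl(\prod_{j\notin u}\mu_j\Bigr).
$$
Because $z_j-\mu_j$ are independent with mean zero, each summand depends only on $\bsx_u$ and integrates to zero in every coordinate $j\in u$, which are exactly the defining properties of the ANOVA effects. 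So I would identify these terms as the $f_u$ and read off
$$
\sigma^2_u = \Bigl(\prod_{j\in u}\sigma^2_j\Bigr)\Bigl(\prod_{j\notin u}\mu_j^2\Bigr), \qquad u\ne\emptyset,
$$
with $\sigma^2_\emptyset=0$.

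Next I would pass to the $\rho_j$ parametrization via $\sigma^2_j = \rho_j\,\e(z_j^2)$ and $\mu_j^2 = (1-\rho_j)\,\e(z_j^2)$. The identity~\eqref{eq:meandimfromsobol} gives $\nu(f) = \sum_{j=1}^d \olt^2_j/\sigma^2$, and the product form makes both numerator and denominator immediate. For the numerator,
$$
\olt^2_j = \sum_{u\ni j}\sigma^2_u = \sigma^2_j\prod_{k\ne j}\bigl(\sigma^2_k+\mu_k^2\bigr) = \rho_j\prod_{k=1}^d\e(z_k^2),
$$
so $\sum_{j=1}^d\olt^2_j = \bigl(\sum_j\rho_j\bigr)\prod_k\e(z_k^2)$. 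For the denominator, either summing $\sigma^2_u$ over $u\ne\emptyset$ or computing $\var(f)$ directly gives
$$
\sigma^2 = \prod_{j=1}^d\e(z_j^2) - \prod_{j=1}^d\mu_j^2 = \Bigl(\prod_{j=1}^d\e(z_j^2)\Bigr)\Bigl(1 - \prod_{j=1}^d(1-\rho_j)\Bigr).
$$
Dividing and cancelling $\prod_k\e(z_k^2)$ produces~\eqref{eq:meandimfromlambda}.

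The only points that need brief justification are that the product expansion really yields the ANOVA effects (uniqueness of the ANOVA decomposition, together with the orthogonality check for mean-zero independent factors) and that $\rho_j\in[0,1]$ with $\sigma^2>0$ under the stated hypothesis $\min_j\var(g_j(x_j))>0$, which ensures the denominator in~\eqref{eq:meandimfromlambda} is strictly positive and $\nu(f)$ is well defined. There is no real obstacle here: once the product is expanded into its centered/mean components, the result is essentially bookkeeping, and the main thing to be careful about is the empty set term, which contributes to $\e(f)$ but not to any $\sigma^2_u$ or any $\olt^2_j$.
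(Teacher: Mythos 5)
Your derivation is correct, but note that the paper does not prove this proposition at all: its ``proof'' is a citation to Proposition 1 of \cite{dimdist}, so your argument is a self-contained replacement rather than a variant of an in-paper proof. What you do is the standard route for products of independent factors: expand $\prod_j(\mu_j+(z_j-\mu_j))$ over subsets, observe that each term depends only on $\bsx_u$ and integrates to zero in every coordinate of $u$ (so by uniqueness of the ANOVA decomposition these are the $f_u$), read off $\sigma^2_u=\prod_{j\in u}\sigma^2_j\prod_{j\notin u}\mu_j^2$, and then compute $\olt^2_j=\rho_j\prod_k\e(z_k^2)$ and $\sigma^2=\prod_k\e(z_k^2)\bigl(1-\prod_j(1-\rho_j)\bigr)$ before applying \eqref{eq:meandimfromsobol}. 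All the algebra checks out: $\sigma^2_k+\mu_k^2=\e(z_k^2)$, $\mu_j^2=(1-\rho_j)\e(z_j^2)$, the hypothesis $\var(g_j(x_j))<\infty$ gives $\e(f(\bsx)^2)=\prod_j\e(z_j^2)<\infty$ so $f\in L^2$, and $\min_j\var(g_j(x_j))>0$ gives $\rho_j>0$, hence $\prod_j(1-\rho_j)<1$ and a strictly positive denominator. The only cosmetic slip is that under the stated hypotheses $\rho_j\in(0,1]$ rather than merely $[0,1]$, which is exactly what you need anyway. What your approach buys is that the reader does not have to chase the external reference; what the paper's citation buys is brevity, since the identity is already established (by essentially this computation) in \cite{dimdist}.
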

\begin{proof}
This is Proposition 1 of \cite{dimdist}.
\end{proof}

\begin{proposition}\label{prop:monotoneinrho}
Under the conditions of Proposition~\ref{prop:meandimprod},
$$
\frac{\partial}{\partial\rho_k}\nu(f)\ge0.
$$
\end{proposition}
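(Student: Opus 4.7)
The plan is to compute $\partial \nu(f)/\partial \rho_k$ directly from~\eqref{eq:meandimfromlambda} and then reduce positivity of the numerator to a sharp elementary inequality. Write $N=\sum_{j=1}^d \rho_j$ and $D=1-\prod_{j=1}^d(1-\rho_j)$, and let $P_{-k}=\prod_{j\ne k}(1-\rho_j)$. Then $\partial N/\partial\rho_k = 1$ and $\partial D/\partial \rho_k = P_{-k}$, so the quotient rule gives
$$
\frac{\partial \nu(f)}{\partial \rho_k} \;=\; \frac{D - N\,P_{-k}}{D^2}.
$$
The hypothesis $\min_j \var(g_j(x_j))>0$ forces every $\rho_j>0$, so $D>0$ and the sign of the derivative is the sign of $D - N P_{-k}$.

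Next, I would substitute $D = 1-(1-\rho_k)P_{-k}$ and $N = \rho_k + \sum_{j\ne k}\rho_j$ and cancel the $\rho_k$ terms. Letting $S = \sum_{j\ne k}\rho_j$, the numerator simplifies to
$$
D - N\,P_{-k} \;=\; 1 - (1+S)\,P_{-k},
$$
so the task reduces to proving $(1+S)\,P_{-k}\le 1$. This is the main (and only) obstacle, though it is modest.

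To verify this inequality, I would apply the two standard bounds $1-x\le e^{-x}$ for $x\in[0,1]$ (each $\rho_j\in[0,1]$ by Proposition~\ref{prop:meandimprod}) and $1+s\le e^s$ for all real $s$. The first gives
$$
P_{-k} \;=\; \prod_{j\ne k}(1-\rho_j) \;\le\; \prod_{j\ne k} e^{-\rho_j} \;=\; e^{-S},
$$
and the second gives $(1+S)\le e^{S}$, so $(1+S)\,P_{-k} \le (1+S)e^{-S} \le 1$. Combining the three displays yields $\partial \nu(f)/\partial \rho_k \ge 0$, completing the proof. The underlying intuition is reassuring: increasing $\rho_k$ concentrates more of the variance of $f$ into factor $k$, which pulls the product further from additivity and therefore raises the mean dimension.
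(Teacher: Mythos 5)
Your proof is correct, and it shares the paper's skeleton: the quotient rule gives the same derivative, with positive denominator and numerator $1-(1+S)\prod_{j\ne k}(1-\rho_j)$ where $S=\sum_{j\ne k}\rho_j$ (the paper states this derivative directly without showing the algebra). The difference is in how the key inequality $(1+S)\prod_{j\ne k}(1-\rho_j)\le 1$ is verified. The paper first applies the AM--GM inequality to the factors $1-\rho_j$, bounding the numerator below by $1-(1-\bar\rho_{-k})^{d-1}\bigl(1+(d-1)\bar\rho_{-k}\bigr)$ with $\bar\rho_{-k}$ the average of the $\rho_j$, $j\ne k$, and then argues that this one-variable expression is increasing on $(0,1)$ and vanishes at $\bar\rho_{-k}=0$. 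You instead combine the elementary bounds $1-x\le e^{-x}$ (valid since each $1-\rho_j\ge 0$, so the product of the bounds dominates the product of the factors) and $1+S\le e^{S}$, giving $(1+S)\prod_{j\ne k}(1-\rho_j)\le(1+S)e^{-S}\le 1$ in one line. Your route is a bit shorter and avoids the monotonicity verification that the paper asserts without computation; it also handles $d=1$ automatically (empty product, $S=0$, numerator $0$), where the paper disposes of that case separately as trivial. Your observation that $\min_j\var(g_j(x_j))>0$ forces every $\rho_j>0$ and hence a strictly positive denominator matches the paper's implicit use of the same fact.
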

\begin{proof}
The result is trivial for $d=1$, so we assume that $d\ge2$.
The partial derivative is
$$
\frac{1-\prod_{j\ne k}(1-\rho_j)[1+\sum_{j\ne k}\rho_j]}{[1-\prod_{j=1}^d(1-\rho_j)]^2}.
$$
The denominator above is positive.  
Letting $\bar\rho_{-k}=(d-1)^{-1}\sum_{j\ne k}\rho_j$,
the numerator is
at least
\begin{align}\label{eq:numbound}
1-(1-\bar\rho_{-k})^{d-1}(1+(d-1)\bar\rho_{-k})
\end{align}
because the geometric mean of $1-\rho_j$ for $j\ne k$ is no
larger than their arithmetic mean.
The expression in~\eqref{eq:numbound} is increasing in
$\bar\rho_{-k}$ over $\bar\rho_{-k}\in(0,1)$
and it equals zero for $\bar\rho_{-k}=0$.
\end{proof}

\begin{proposition}\label{prop:gausrbflimits}
Let $x$ be a random variable with probability
density function $h$ on $\real$.  Assume that
$h(x)\le M$ and that $c\in\real$ belongs to an
interval $I$ of length at least $\ell>0$ on
which $h(x)\ge h_0>0$.  Then
\begin{align*}
\lim_{\vartheta\to\infty}\frac{\e(e^{-2(x-c)^2/\vartheta^2})}{\e(e^{-(x-c)^2/\vartheta^2})^2 }=1.
\end{align*}
and
\begin{align}\label{eq:gausmsebysquaremean}
\lim_{\vartheta\downarrow0}\frac{\e(e^{-2(x-c)^2/\vartheta^2})}{\e(e^{-(x-c)^2/\vartheta^2})^2 }=\infty.
\end{align}
\end{proposition}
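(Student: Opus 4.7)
The plan is to handle the two limits separately. For $\vartheta\to\infty$ I will apply the dominated convergence theorem, while for $\vartheta\downarrow 0$ I will use the change of variables $u=(x-c)/\vartheta$ together with the one-sided pointwise bounds on $h$ to show that the numerator is of order $\vartheta$ and the denominator is at most of order $\vartheta^2$, giving a ratio that blows up like $1/\vartheta$.

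For the first limit, I observe that for every fixed $x\in\real$ and $k\in\{1,2\}$ we have $e^{-k(x-c)^2/\vartheta^2}\to 1$ as $\vartheta\to\infty$, and each integrand is bounded above by $1$, which is $h$-integrable because $h$ is a probability density. Dominated convergence then gives
$$\e\bigl(e^{-(x-c)^2/\vartheta^2}\bigr)\to 1\quad\text{and}\quad \e\bigl(e^{-2(x-c)^2/\vartheta^2}\bigr)\to 1,$$
so the ratio tends to $1/1^2=1$.

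For the second limit, I will bound the ratio below. Using $h(x)\le M$ and the substitution $u=(x-c)/\vartheta$,
$$\e\bigl(e^{-(x-c)^2/\vartheta^2}\bigr)\le M\int_{-\infty}^{\infty}e^{-(x-c)^2/\vartheta^2}\rd x=M\vartheta\sqrt{\pi},$$
so the squared denominator is at most $M^2\pi\vartheta^2$. For the numerator, I restrict the expectation to $I$ and use $h\ge h_0$ there:
$$\e\bigl(e^{-2(x-c)^2/\vartheta^2}\bigr)\ge h_0\int_I e^{-2(x-c)^2/\vartheta^2}\rd x=h_0\vartheta\int_{(I-c)/\vartheta}e^{-2u^2}\rd u.$$
Since $c\in I$ and $|I|\ge\ell$, at least one of the two sides of $c$ inside $I$ has length $\ge\ell/2$, so the translated scaled set $(I-c)/\vartheta$ contains either $[0,\ell/(2\vartheta)]$ or $[-\ell/(2\vartheta),0]$. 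Hence the truncated Gaussian integral is at least $\int_0^{\ell/(2\vartheta)}e^{-2u^2}\rd u$, which increases to $\sqrt{\pi/8}>0$ as $\vartheta\downarrow 0$ and so is bounded below by some positive constant $C_1$ for all small $\vartheta$. Combining the two bounds, the ratio is at least $h_0C_1/(M^2\pi\vartheta)$, which tends to $\infty$.

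There is no substantive obstacle; both halves of the proposition reduce to standard comparisons with Gaussian integrals. The only place that calls for a bit of care is the lower bound when $c$ sits near an endpoint of $I$, where one of the two ``sides'' of $c$ may be arbitrarily short; the half-interval observation above ensures that the other side still captures enough mass of the Gaussian to secure a lower bound of the correct order.
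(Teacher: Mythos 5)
Your proposal is correct and follows essentially the same route as the paper: dominated convergence for the $\vartheta\to\infty$ limit, and for $\vartheta\downarrow0$ the bound $\e(e^{-(x-c)^2/\vartheta^2})\le M\sqrt{\pi}\,\vartheta$ against a change-of-variables lower bound $\e(e^{-2(x-c)^2/\vartheta^2})\ge C\,h_0\vartheta$, so the ratio grows like $1/\vartheta$. Your half-interval trick for $c$ near an endpoint of $I$ plays exactly the role of the paper's worst-case bound $\Phi(2(b-c)/\vartheta)-\Phi(2(a-c)/\vartheta)\ge\Phi(2(b-a)/\vartheta)-\tfrac12$, so the two arguments coincide in substance.
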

\begin{remark}
The first limit has a mean square over a squared
mean approach 1.  Then the variance becomes negligible,
so $\rho\to0$ in the above notation. The second limit
has a mean square divided by a squared mean approach
infinity, so $\rho\to1$ in the above notation.
\end{remark}
\begin{proof}
The first claim is easy as both numerator and denominator
approach $1$ as $\vartheta\to\infty$.  For the second claim
\begin{align*}
\e(e^{-(x-c)^2/\vartheta^2})
&=\int_{-\infty}^\infty
e^{-(x-c)^2/\vartheta^2}
h(x)\rd x\le M\sqrt{\pi}\vartheta.
\end{align*}
We let $I = (a,b)$ with $b-a=\ell$.
Next by change of variable
\begin{align*}
   \e(e^{-2(x-c)^2/\vartheta^2})
&=\frac{\vartheta}2\int_{-\infty}^\infty e^{-y^2/2}h( c+\vartheta y/2)\rd y\\
&\ge\frac{\vartheta h_0}2\int_{-\infty}^\infty e^{-y^2/2}\bsone_{c+\vartheta y/2\in I}\rd y\\
&=\frac{\vartheta h_0}2\int_{2(a-c)/\vartheta}^{2(b-c)/\vartheta} e^{-y^2/2}\rd y\\
&=\vartheta h_0\sqrt{\frac\pi2}
\Bigl(\Phi\Bigl(\frac{2(b-c)}\vartheta\Bigr)-\Phi\Bigl(\frac{2(a-c)}{\vartheta}\Bigr)\Bigr) \\
&\ge\vartheta h_0\sqrt{\frac\pi2}
\Bigl(\Phi\Bigl(\frac{2(b-a)}\vartheta\Bigr)-\frac12\Bigr). \end{align*}
For any $\ell=b-a>0$ we can choose $\vartheta$ small enough to make $\Phi( 2\ell/\vartheta)\ge3/4$
and then $\e(e^{-2(x-c)^2/\vartheta^2})\ge \vartheta h_0\sqrt{\pi/32}$. Now the numerator in~\eqref{eq:gausmsebysquaremean} is $\Omega(\vartheta)$
while the denominator is $O(\vartheta^2)$ both as
$\vartheta\to\infty$. The result follows.
\end{proof}

In the Gaussian setting, $\rho_j>0$ and ruling out
uninteresting variables with $\var(x_j)=0$ we also
have $\rho_j<1$.
The mean dimension of $f$ is continuous and
nondecreasing in each $\rho_j$,
by Proposition~\ref{prop:monotoneinrho}.
By Proposition~\ref{prop:gausrbflimits},
each $\rho_j\to1$ as $\vartheta\to0$, 
when $x_j$ has a continuous distribution
and so $\nu(f)\to d$.
Conversely as $\vartheta\to\infty$, each $\rho_j\to0$
and then $\nu(f)\to1$. 
Therefore any mean dimension in
$(1,d)$ can be attained at some value of $\vartheta$.

\section{Keister's function}\label{sec:keister}

The Keister function was used by \cite{keis:1996}
and also \cite{caps:keis:1996},
to compare multi-dimensional quadrature methods.
These papers use $\int_{\real^d}e^{-\Vert\bsx\Vert^2}\cos(\Vert\bsx\Vert)\rd\bsx$
as an example of the sort of integration problem arising in atomic, nuclear and particle physics.
We make a change of variable and consider
$$
f(\bsx) = \cos( \Vert\bsx\Vert/2),
$$
for $\bsx\sim\dnorm(0,I)$.
This $f$ is a radial basis function but not one of those commonly used
for approximation. \cite{caps:keis:1996} and \cite{keis:1996}
give precise values for $\e(f(\bsx))$ at certain values of $d$.
\cite{jaga:hick:2019} give a recursion for this expectation.

Keister's function has become a test
function for QMC, since  \cite{papa:trau:1997}.
The success of QMC on some integrands from
finance could possibly be explained by the
unequal importance of the variables in those
integrands.  Perhaps many of them were
quite unimportant leaving an integrand that
depends on only a few variables.  All $d$
variables enter Keister's function symmetrically
so there would need to be another explanation
for QMC's successes there.
The explanation is that it is dominated by its
low dimensional ANOVA components.  Computations
in \cite{dimdist} show that for $d=25$ 
(the dimension considered by \cite{papa:trau:1997}
and $d=80$, over $99$\% of the variance of the Keister function
comes from variance components $\sigma^2_u$ with
$|u|\le 3$ making it of effective dimension
$3$ in the sense of \cite{cafl:moro:owen:1997}.
Here we study the Keister function's mean dimension
for $2\le d\le1000$.

By symmetry, $\olt_1^2=\olt^2_2=\cdots=\olt^2_d$ for the Keister function and
so its mean dimension is
$\nu(f)=d\olt^2_1/\sigma^2$.
The variance $\sigma^2$ can easily
be approximated by sampling because $\Vert\bsx\Vert^2\sim\chi^2_{(d)}$.
For this paper, we used a midpoint
rule on $n=2^{14}=16{,}384$ points
in $(0,1)$, transformed them to $\chi^2_{(d)}$
quantiles, took the square root
to get a sample value for 
$\Vert\bsx\Vert$
and then computed the sample variance
of the $\cos(\Vert\bsx\Vert/2)$ values.

To estimate $\olt^2_1$, we find using
the Jansen identity \eqref{eq:jansens} 
that
$$
\olt^2_1 = \frac12\e\bigl((f(z_1+z_2)-f(z_1+z_3) )^2\bigr)
$$
for
$z_1=\sum_{j=2}^dx_j^2$,
$z_2=x_1^2$ and $z_3={x'_1}^2$.
Now $z_1\sim\chi^2_{(d-1)}$,
$z_2\sim\chi^2_{(1)}$ and
$z_3\sim\chi^2_{(2)}$ are independent
random variables.  We then estimated
$\olt^2_1$ by using randomized Sobol'
points in $(0,1)^3$, transforming them to the needed $\chi^2$ values by inversion of their cumulative distribution functions and
applied the Jansen formula.
For this integral we used a
Sobol' sequence \cite{sobo:1967:tran}
with direction numbers from~\cite{joe:kuo:2008}
and a nested uniform scramble  of \cite{rtms}
with $n=2^{14}=16{,}384$ points.

The above computation was replicated
five times independently.
With a bit of foresight, we plot mean dimension of Keister's
function in dimension $d$ versus $\sqrt{d}$ in Figure~\ref{fig:keistersqrt}.  
The plot shows all five replicates but
they overlap each other in the figure.
The mean dimension is
not monotone in $d$. Instead for $d\ge2$, the mean dimension oscillates regularly from
just over $1$ to peaks that are eventually just over $2$.

\begin{figure}
\centering
\includegraphics[width=\hsize]{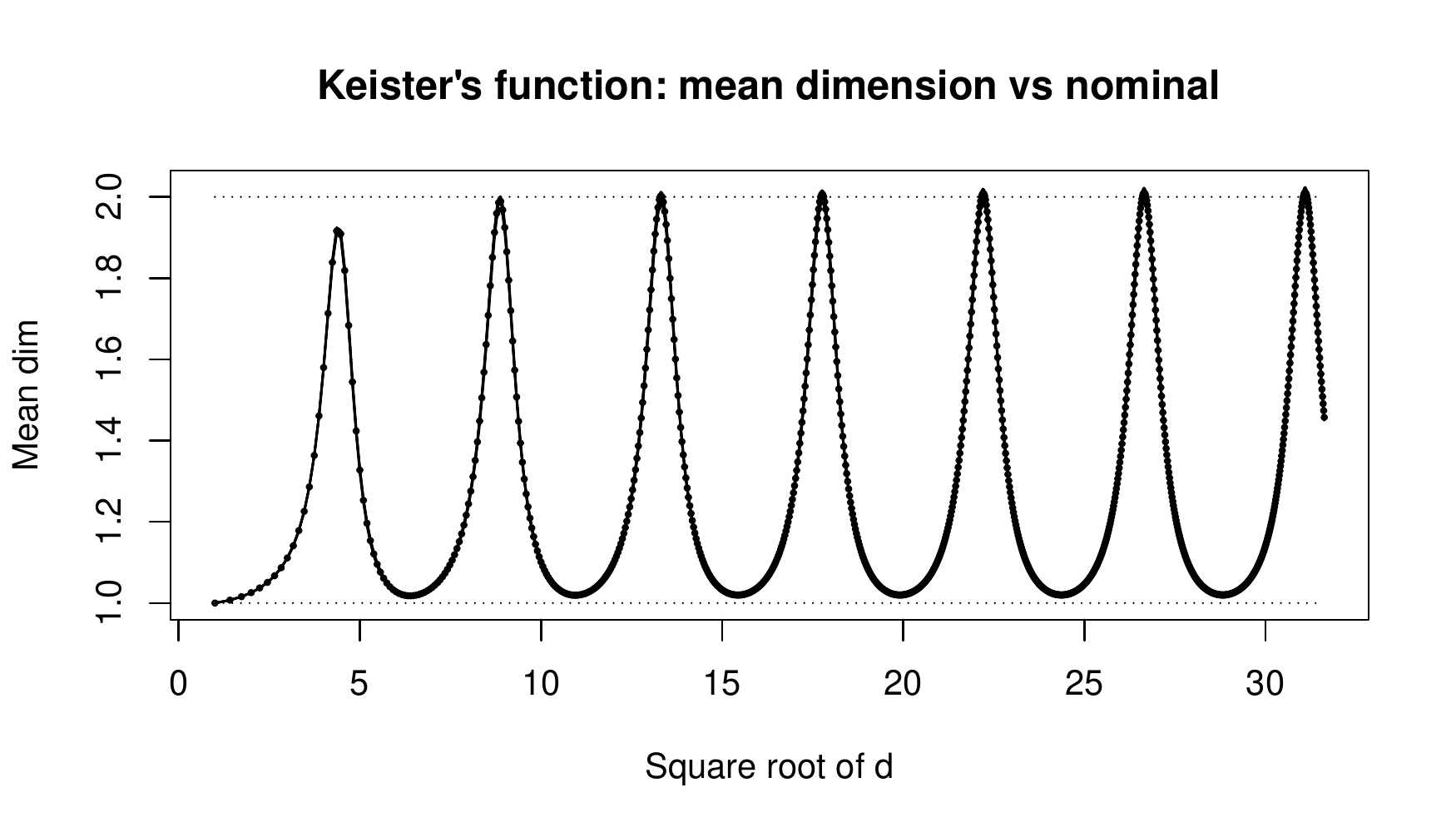}
\caption{\label{fig:keistersqrt}
The horizontal axis has $\sqrt{d}$ for $1\le d\le1000$.
The vertical axis plots five lines, each an independent
randomized QMC estimate of mean dimension versus $\sqrt{d}$.  Points
mark the average of the five values. There are dotted horizontal references lines
at levels $1$ and $2$.
}
\end{figure}

From Figure~\ref{fig:keistersqrt} it becomes clear what is going on.
The random variable $\Vert\bsx\Vert^2$ has a $\chi^2_{(d)}$
distribution.  For large $d$, this is approximately $\dnorm(d,2d)$.
Then by the delta method (Taylor approximation about the mean),
$\Vert\bsx\Vert/2$ has approximately the $\dnorm(\sqrt{d}/2,1/4)$
distribution.  The central $99.9$\% of $\dnorm(\alpha,1/4)$ values
belong to the range $\alpha\pm\Phi^{-1}(0.9995)/\sqrt{4}$
or about $\alpha\pm1.65$. Then $\cos(\Vert\bsx\Vert/2)$
primarily uses the cosine function over an interval of length about $3.3$,
roughly half of the period $2\pi$ of the cosine function. When $\sqrt{d}/2$
is nearly an integer multiple of $\pi$, then the cosine function is being
sampled predominantly in a region where it is nearly quadratic
and we find that the mean dimension is close to $2$.
If instead, $\sqrt{d}/2$ is nearly $\pi/2$ plus an integer multiple of $\pi$,
then the cosine is being sampled over a nearly linear range and the mean
dimension is close to $1$.

\section{Discussion}\label{sec:conclusions}

Much success in high dimensional numerical
methods comes from the target function having less
complexity than we might expect given its
nominal dimension.  See \cite{cafl:moro:owen:1997}
or \cite{nova:wozn:2008} or \cite{sloa:wozn:1998}
among other references. In that literature,
tractability results provide sets of assumptions
under which there is no curse of dimensionality.
\cite{effdimsobolev} and references
therein show that
some weighted Hilbert spaces for which dimension-independent tractability has been established
have very low effective dimension in the
superposition sense (e.g., $3$ or less
from the $\eta=1$ column of Table 1 in \cite{effdimsobolev}).  An effective
dimension of $3$ implies that there are
only negligible contributions to $f$
from variance components $\sigma^2_u$
with $|u|>3$.
We then expect that 
components $\phi(\bsx)$ with a mean 
dimension that is $O(1)$ as $d\to\infty$
to be most useful because they fit naturally with the
subset of high dimensional problems where tractability
results have been established.

Generalized multiquadric RBFs with a mean dimension
of $1+O(1/d)$ take this too far because they will have
difficulty with test problems involving even two or three
factor interactions.  Gaussian RBFs can attain such
mean dimensions if their parameters are well chosen.
Ridge functions $\phi(\theta^\tran\bsx)$
with Lipshitz continuous
$\phi(\cdot)$ and a unit vector $\theta$
attain an $O(1)$ mean dimension automatically,
under Gaussian sampling \citep{hoyt:owen:2020}.

\section*{Acknowledgments}
We thank Naofumi Hama for comments on the
role of RBFs in machine learning.
This work was supported by the U.S.\ National Science Foundation under
grants  IIS-1837931  
and DMS-2152780 and by Hitachi, Ltd.
\bibliographystyle{plainnat}
\bibliography{qmc}

\appendix

\section{Moments of some sums}
\label{sec:summoments}
Here we provide some moment formulas
needed later.
We begin by working out some expressions for
central moments of sums of our random variables.
For integers $k\ge2$ we use
$\mu^{(k)}_j=\e((z_j-\mu_j)^k)$ to denote
$k$'th central moments and
$$\mu_{1:d}^{(k)}\equiv\sum_{j=1}^d \mu_j^{(k)}.$$
For $k=1$, we use $\mu_j=\e(z_j)$ and $\mu_{1:d}=\sum_{j=1}^d\mu_j$
and for $k=2$, we use $\sigma^2_j$ and
$\sigma^2_{1:d}=\sum_{j=1}^d\sigma^2_j$.
The following theorem simplifies some derivations.
\begin{theorem}\label{thm:petrovs}
For $d\ge1$, let $x_1,\dots,x_d$ be independent random variables with $\e(|x_j|^k)<\infty$
for $j=1,\dots,d$ and some integer $k\ge2$.
Set $x_{1:d}=\sum_{j=1}^dx_j$.
If $\e(x_j)=0$ for $j=1,\dots,d$ then
$$
\e[ |x_{1:d}|^k] \le c(p)d^{k/2-1}\sum_{j=1}^d\e[ |x_j|^k]
$$
for some $c(k)<\infty$.
\end{theorem}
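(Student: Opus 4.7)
The plan is to factor the desired inequality into a Marcinkiewicz--Zygmund step carrying all the probabilistic content, followed by a purely deterministic power-mean step. Concretely, I aim to prove the chain
$$\e|x_{1:d}|^k \;\le\; C_k\,\e\!\Bigl[\Bigl(\sum_{j=1}^d x_j^2\Bigr)^{k/2}\Bigr] \;\le\; C_k\, d^{k/2-1}\sum_{j=1}^d \e|x_j|^k,$$
yielding the theorem with $c(k)=C_k$.

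For the first inequality, I would invoke the classical Marcinkiewicz--Zygmund inequality: for independent mean-zero summands with $k$-th moments, $\e|\sum_j x_j|^k\le C_k\,\e(\sum_j x_j^2)^{k/2}$ with $C_k$ depending only on $k$. Because the hypothesis restricts to integer $k\ge 2$, one can prove this self-contained by multinomial expansion: expanding $\e(\sum_j x_j)^{2m}$ and using $\e x_j=0$ kills every term containing an exponent equal to $1$, so at most $m=k/2$ distinct indices are active in each surviving monomial, which is precisely what matches the right-hand side after Hölder. Odd $k$ is then reduced to adjacent even moments by Cauchy--Schwarz, $\e|x_{1:d}|^{2m+1}\le(\e|x_{1:d}|^{2m})^{1/2}(\e|x_{1:d}|^{2m+2})^{1/2}$; alternatively one can simply cite Petrov's \emph{Sums of Independent Random Variables}, which is what the theorem label seems to reference.

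For the second inequality, the map $y\mapsto y^{k/2}$ is convex on $[0,\infty)$ whenever $k\ge 2$, so Jensen applied to the uniform average $\frac1d\sum_j x_j^2$ gives the pointwise bound
$$\Bigl(\sum_{j=1}^d x_j^2\Bigr)^{k/2} \;=\; d^{k/2}\Bigl(\frac1d\sum_{j=1}^d x_j^2\Bigr)^{k/2} \;\le\; d^{k/2-1}\sum_{j=1}^d|x_j|^k,$$
and taking expectations completes the proof.

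The main obstacle is producing a clean constant $C_k$ in the Marcinkiewicz--Zygmund step, since the combinatorial bookkeeping in the multinomial proof is the one place the argument is not routine: one must verify both that no more than $k/2$ distinct $x_j$'s can be active in a surviving term, and that weighted AM--GM (or Hölder) then converts each resulting product of mixed central moments into a sum of $\e|x_j|^k$ terms with the right $d$-scaling. Everything else---the convexity step and the odd-$k$ Cauchy--Schwarz patch---is mechanical.
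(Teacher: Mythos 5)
Your main chain is correct and, unlike the paper, actually constitutes a proof: the paper disposes of this statement with a one-line citation to Petrov's book, whereas you factor the bound into the Marcinkiewicz--Zygmund inequality $\e|x_{1:d}|^k\le C_k\,\e[(\sum_j x_j^2)^{k/2}]$ (valid for independent mean-zero summands with only $k$-th moments finite) followed by the deterministic power-mean bound $(\sum_j x_j^2)^{k/2}\le d^{k/2-1}\sum_j|x_j|^k$, which is exactly the convexity of $y\mapsto y^{k/2}$ for $k\ge2$. That second step is airtight, and the first is a classical result with constants depending only on $k$, so the theorem follows with $c(k)=C_k$; your route also makes transparent where the $d^{k/2-1}$ factor comes from, which the bare citation does not. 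One caveat about your proposed self-contained variant: the multinomial-expansion argument is fine for even $k$, but the odd-$k$ patch via Cauchy--Schwarz between the adjacent even moments $\e|x_{1:d}|^{2m}$ and $\e|x_{1:d}|^{2m+2}$ is not available under the stated hypotheses, since only the $k$-th ($=2m+1$) moments of the $x_j$ are assumed finite and, even when they exist, the resulting product $\bigl(d^{m-1}\sum_j\e|x_j|^{2m}\bigr)^{1/2}\bigl(d^{m}\sum_j\e|x_j|^{2m+2}\bigr)^{1/2}$ does not reduce to $d^{k/2-1}\sum_j\e|x_j|^{k}$ without further assumptions; for odd $k$ you should instead rely on the cited Marcinkiewicz--Zygmund (or Rosenthal/Petrov) inequality, or symmetrize and truncate, rather than interpolate between even moments. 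Since your primary argument cites Marcinkiewicz--Zygmund directly, this does not affect the validity of the proof as proposed. (Incidentally, the paper's $c(p)$ in the displayed bound is a typo for $c(k)$, as your statement of the constant makes clear.)
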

\begin{proof}
This is in \cite{petrov1992moments}.
\end{proof}

\begin{proposition}\label{prop:sixmoments}
For $j=1,\dots,d$, let $z_j$ be independent random variables
with means $\mu_j$ and variances $\sigma^2_j$.
Let $z_1,\dots,z_d$ satisfy the sixth moment bounds~\eqref{eq:sixmomentbounds}
for some $\lambda<\infty$.
Then
\begin{align}
\e[ (z_{1:d} / \mu_{1:d} - 1)^2 ] &= \sigma_{1:d}^2 / (\mu_{1:d})^2 \label{eq:mom2}\\
\e[ (z_{1:d} / \mu_{1:d} - 1)^3 ] &= \mu^{(3)}_{1:d} / (\mu_{1:d})^3 \label{eq:mom3}\\
\e[ (z_{1:d} / \mu_{1:d} - 1)^4 ] &= 3 (\sigma_{1:d}^2)^2 / (\mu_{1:d})^4 + O(d^{-3}) \label{eq:mom4}\\
\e[ (z_{1:d} / \mu_{1:d} - 1)^5 ] &= O(d^{-3}),\quad\text{and} \label{eq:mom5}\\
\e[ (z_{1:d} / \mu_{1:d} - 1)^6 ] &= O(d^{-3}).\label{eq:mom6} 
\end{align}
\end{proposition}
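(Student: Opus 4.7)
The plan is to change to centered variables $w_j := z_j - \mu_j$, which are independent with $\e(w_j)=0$ and $\e(w_j^k)=\mu_j^{(k)}$ satisfying $|\mu_j^{(k)}|\le \lambda$ for $2\le k\le 6$ by~\eqref{eq:sixmomentbounds}. Since $z_{1:d}/\mu_{1:d}-1 = w_{1:d}/\mu_{1:d}$ and the mean condition~\eqref{eq:boundedmean} gives $\mu_{1:d}=\Theta(d)$ and hence $\mu_{1:d}^k=\Theta(d^k)$, each claim reduces to expanding $\e(w_{1:d}^k)$ and then dividing.

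I would then apply the multinomial theorem together with independence to obtain
$$\e(w_{1:d}^k) = \sum_{\substack{k_1+\cdots+k_d=k \\ k_j\ge 0}} \binom{k}{k_1,\ldots,k_d}\prod_{j=1}^d \e(w_j^{k_j}),$$
with the convention $\e(w_j^0)=1$ and $\e(w_j^1)=0$. The key observation is that any multi-index containing an exponent $k_j=1$ contributes zero, so only the ``allowed'' multi-indices---those whose nonzero entries are all at least $2$---survive. For $k=2$ and $k=3$ only the single multi-index of type $(k)$ is allowed, giving~\eqref{eq:mom2} and~\eqref{eq:mom3} exactly. For $k=4$ the allowed types are $(4)$ and $(2,2)$; the former yields $\sum_j\mu_j^{(4)}$ and the latter yields $3\bigl((\sigma_{1:d}^2)^2-\sum_j\sigma_j^4\bigr)$. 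Both remainder sums $\sum_j\mu_j^{(4)}$ and $\sum_j\sigma_j^4$ are $O(d)$ under~\eqref{eq:sixmomentbounds}, and combined with $\mu_{1:d}^4=\Theta(d^4)$ this yields~\eqref{eq:mom4}.

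For the higher orders I would simply enumerate the allowed types. At $k=5$ the only survivors are $(5)$ and $(3,2)$, with corresponding sums of orders $O(d)$ and $O(d^2)$, so $\e(w_{1:d}^5)=O(d^2)$ and division by $\mu_{1:d}^5=\Theta(d^5)$ gives~\eqref{eq:mom5}. At $k=6$ the allowed types are $(6)$, $(4,2)$, $(3,3)$, and $(2,2,2)$, contributing $O(d)+O(d^2)+O(d^2)+O(d^3)=O(d^3)$, so division by $\Theta(d^6)$ yields~\eqref{eq:mom6}. The only thing to keep straight is which partitions of $\{1,\ldots,k\}$ have all blocks of size at least $2$, plus the routine bound of $d^r$ times a constant on each sum over $r$-tuples of distinct indices; neither step is deep. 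One could alternatively invoke Theorem~\ref{thm:petrovs} for~\eqref{eq:mom6}, but that route only gives $O(d^{-5/2})$ for~\eqref{eq:mom5}, so the explicit multinomial expansion is what is needed in the odd case to match the advertised $O(d^{-3})$ rate.
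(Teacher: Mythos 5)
Your proof is correct, and for the even exponent $k=6$ it takes a genuinely different route from the paper. The paper treats $k=2,3,4$ as elementary (as you do, via the multinomial/partition expansion), handles $k=5$ by exactly the same explicit enumeration you describe --- only the partition types $(5)$ and $(3,2)$ survive, giving $\e[(z_{1:d}-\mu_{1:d})^5]=O(d^2)$, and the paper likewise notes that Petrov's absolute-moment bound would only give $O(d^{-5/2})$ there --- but for $k=6$ it invokes the cited Petrov inequality (Theorem~\ref{thm:petrovs}), which gives $\e[|w_{1:d}|^6]\le c(6)\,d^{2}\,\mu^{(6)}_{1:d}=O(d^{3})$ directly. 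Your alternative is to enumerate the allowed types $(6)$, $(4,2)$, $(3,3)$, $(2,2,2)$ and bound the dominant $(2,2,2)$ sum by $O(d^3)$; this is entirely valid and has the advantage of being self-contained (no appeal to an external moment inequality) and of yielding explicit constants such as $15\lambda^3/\umu^6$ for the leading term, whereas the paper's Petrov route is shorter, scales immediately to any even $k$, and is what the authors use to state the implied constant $\lambda c(6)/\umu^6$ in their closing remark. One point to keep explicit: like the paper's proof, your argument uses the lower mean bound~\eqref{eq:boundedmean} to get $\mu_{1:d}\ge d\,\umu$ (hence $\mu_{1:d}^k=\Theta(d^k)$), which is not listed among the proposition's stated hypotheses; you correctly flag that you are using it, and the paper does the same, so this is a shared implicit assumption rather than a gap in your argument.
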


\begin{proof}
The results for exponents $k=2,3,4$ are elementary.
Theorem~\ref{thm:petrovs} (Petrov) yields
$$\e\Bigl[ \Bigl(\frac{z_{1:d}}{\mu_{1:d}}-1\Bigr)^k\Bigr]
\le c(k)d^{k/2-1}
\frac{\mu^{(k)}_{1:d}}{(\mu_{1:d})^k}
\le \frac{\lambda c(k)d^{k/2}}{(d\umu)^k}
=O(d^{-k/2}).
$$
Taking $k=6$ above provides the result~\eqref{eq:mom6}
for the sixth moment.

The case of $k=5$
remains. Petrov's Theorem would only give us
$O(d^{-5/2})$.  The difference is that Petrov's
theorem is about an absolute moment and
our requirement is for just for an expected
fifth power.
For $k=5$ we get
\begin{align}
\e[ (z_{1:d}-\mu_{1:d})^5]
&=
10\sum_{\substack{j_1,j_2\in[d]\\\text{distinct}}}
\e[ (z_{j_1}-\mu_{j_1})^3(z_{j_2}-\mu_{j_2})^2]
+\sum_{j=1}^d\e[ (z_j-\mu_j)^5]\notag\\
&=10\mu^{(3)}_{1:d}\sigma^2_{1:d}-10\sum_{j=1}^d\mu^{(3)}_j\sigma^2_j+\mu^{(5)}_{1:d},\label{eq:5thing}
\end{align}
where the factor $10$ comes from
there being $10$ partitions like
$j_1=j_2\ne j_3=j_4=j_5$.
The quantity in~\eqref{eq:5thing}
is then $O(d^2)$
establishing~\eqref{eq:mom5}.

The implied constant in the fourth
degree term can be taken as $3\lambda$.
The implied constant in the fifth
degree term can be taken as 
$10\lambda^2/\umu^5+\epsilon$
for any $\epsilon>0$.
The implied constant in the sixth
degree term can be taken as
$\lambda c(6)/\umu^6$. 
\end{proof}

For the next result, we prove an upper bound on negative moments. We use the quantity
\begin{align}\label{eq:quantitybeta}
    \beta := \frac{1}{ \umu \sqrt[\alpha]{M_\alpha} }
\end{align}
recalling that $\e(z_j^{-\alpha})\le M_\alpha<\infty$ 
from equation~\eqref{eq:negmoment}.
This $\beta$ is useful in providing constant upper bounds for negative moments.

\begin{proposition}\label{prop:constUpperBound}
Let $z_1, z_2, \ldots, z_d$ be independent 
nonnegative random variables that satisfy the mean bounds~\eqref{eq:boundedmean} and the negative moment assumption~\eqref{eq:negmoment} for some $\alpha>0$ and choose an exponent $p<0$. Then $\e( (z_{1:d}/\mu_{1:d})^p)\le\beta^p$ for all $d \geq -p/\alpha$.
\end{proposition}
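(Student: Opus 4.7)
The plan is to leverage AM-GM to convert a single negative moment of the sum $z_{1:d}$ into a product of small negative moments of the individual summands, each of which the hypothesis \eqref{eq:negmoment} controls. AM-GM gives $z_{1:d}\geq d\bigl(\prod_{j=1}^d z_j\bigr)^{1/d}$, and since $p<0$, raising to the $p$-th power reverses the inequality:
$$
z_{1:d}^p\leq d^p\prod_{j=1}^d z_j^{p/d}.
$$
Taking expectations and using independence of the $z_j$,
$$
\e\bigl[z_{1:d}^p\bigr]\leq d^p\prod_{j=1}^d\e\bigl[z_j^{p/d}\bigr].
$$

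The hypothesis $d\geq -p/\alpha$ forces the exponent $-p/(d\alpha)$ to lie in $(0,1]$, so Jensen's inequality applied to the concave map $t\mapsto t^{-p/(d\alpha)}$ on $[0,\infty)$ yields
$$
\e\bigl[z_j^{p/d}\bigr]=\e\bigl[(z_j^{-\alpha})^{-p/(d\alpha)}\bigr]\leq\bigl(\e[z_j^{-\alpha}]\bigr)^{-p/(d\alpha)}\leq M_\alpha^{-p/(d\alpha)}.
$$
Multiplying over $j$ collapses the product to $M_\alpha^{-p/\alpha}$, giving $\e[z_{1:d}^p]\leq d^p M_\alpha^{-p/\alpha}$. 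Finally, multiply by $\mu_{1:d}^{-p}$; the mean bound \eqref{eq:boundedmean} lets us absorb $\mu_{1:d}^{-p}\cdot d^p$ into a uniform constant by noting $\mu_{1:d}/d$ lies between $\umu$ and $\omu$, producing the claimed bound $\beta^p$.

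The principal obstacle is that $|p|$ is allowed to exceed $\alpha$, so the bare hypothesis \eqref{eq:negmoment} cannot be applied to $\e[z_{1:d}^{-|p|}]$ directly. The AM-GM step sidesteps this by splitting the total negative exponent $|p|$ evenly across the $d$ independent factors, so that each individual exponent $|p|/d$ falls within the range $[0,\alpha]$ where \eqref{eq:negmoment} combined with concave Jensen yields a clean bound. The threshold $d\geq -p/\alpha$ is precisely the sharpest requirement under which this reduction is valid, which is why it appears verbatim in the statement.
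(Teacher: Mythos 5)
You take essentially the same route as the paper: AM--GM converts the negative power of the sum into a product of $d$ small negative powers of the individual $z_j$, the hypothesis $d\ge -p/\alpha$ is exactly what makes the Jensen exponent admissible, and the negative moment bound~\eqref{eq:negmoment} finishes. The only difference is cosmetic: you factor the expectation by independence first and apply Jensen coordinatewise with the concave map $t\mapsto t^{-p/(d\alpha)}$, whereas the paper applies Jensen once to the whole product with the convex map $x\mapsto x^{-\alpha d/p}$; the two are interchangeable.

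The step that does not deliver the stated constant is your last one. From $\e[z_{1:d}^p]\le d^p M_\alpha^{-p/\alpha}$ you get $\e[(z_{1:d}/\mu_{1:d})^p]\le (\mu_{1:d}/d)^{-p}M_\alpha^{-p/\alpha}$, and since $-p>0$ the factor $(\mu_{1:d}/d)^{-p}$ is controlled by the \emph{upper} mean bound in~\eqref{eq:boundedmean}: $(\mu_{1:d}/d)^{-p}\le\omu^{-p}$, not $\umu^{-p}$. So ``absorbing into a uniform constant'' yields $\omu^{-p}M_\alpha^{-p/\alpha}$, which for $p<0$ and $\umu<\omu$ is strictly larger than the claimed $\beta^p=\umu^{-p}M_\alpha^{-p/\alpha}$, and no amount of care in that step recovers $\beta^p$. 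You are in good company: the paper's own proof makes the identical slip when it replaces $(d/\mu_{1:d})^p$ by $(d/(d\umu))^p$, an inequality whose direction reverses for negative $p$, and the bound as literally stated can fail (take all $z_j\equiv\omu>\umu$ and $M_\alpha=\omu^{-\alpha}$: the left-hand side is $1$ while $\beta^p=(\umu/\omu)^{-p}<1$). The harmless fix, for both your write-up and the paper, is to state the conclusion with $\omu$ in place of $\umu$ in the definition of $\beta$; where the proposition is invoked (the remainder bound in Proposition~\ref{prop:pmomentwith6}) only an $O(1)$ constant is needed, so nothing downstream changes.
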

\begin{proof}

For $d \geq -p/\alpha$, we find that $\phi(x) = x^{-\alpha d/p}$ is a convex function. Then
using the mean lower bound~\eqref{eq:boundedmean}, the arithmetic-geometric mean identity
and Jensen's inequality
\begin{align*}
\mathbb{E}\left[ \left( \frac{z_{1:d}}{\mu_{1:d}} \right)^{p} \right] 
&= \left( \frac{d}{ \mu_{1:d} } \right)^p \cdot \mathbb{E}\left[ \left( \frac{z_{1:d}}{d} \right)^{p} \right] \\
&\leq \left( \frac{d}{ d \umu } \right)^p \cdot \mathbb{E}\left[ z_1^{p/d} z_2^{p/d} \cdots z_d^{p/d} \right] 
\\
&\leq \umu^{-p} \cdot \mathbb{E}\left[ (z_1^{p/d} z_2^{p/d} \cdots z_d^{p/d})^{-\alpha d / p} \right]^{-p / \alpha d} 
\\
&\leq \umu^{-p} \cdot \mathbb{E}\left[ z_1^{-\alpha } z_2^{-\alpha} \cdots z_d^{-\alpha} \right]^{-p / \alpha d} \\
&\leq \umu^{-p} \cdot (M_\alpha^d)^{-p / \alpha d} \\
&\leq \beta^p.\qedhere
\end{align*}
\end{proof}

\begin{remark}
This result shows that any
negative moment of the sample average
is $O(1)$ as $d\to\infty$, under the
given conditions.
\end{remark}

This next result is used to control the Lagrange error
term in some Taylor approximations.
\begin{proposition}\label{prop:pmomentwith6}
Let independent $z_j\ge0$ satisfy the mean
bounds~\eqref{eq:boundedmean} as well as
condition~\eqref{eq:sixmomentbounds} on their
first six central moments
and the negative moment
condition~\eqref{eq:negmoment}.
Then for $p<6$, 
$\e( (z_{1:d}/\mu_{1:d})^p)$ equals
$$ 
1
+ \biggl( \frac{ (p)_2 }{2!} \cdot \frac{ \sigma^2_{1:d} }{ ( \mu_{1:d})^2} \biggr) 
+ \biggl( \frac{ (p)_3 }{3!} \cdot \frac{ \mu^{(3)}_{1:d} }{ (\mu_{1:d})^3 } 
+ \frac{ (p)_4 }{4!} \cdot \frac{3 (\sigma^2_{1:d})^2 }{ (\mu_{1:d})^4 } \biggr)
+ O(d^{-3})
$$
as $d\to\infty$.
\end{proposition}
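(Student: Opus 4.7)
The plan is to Taylor-expand $(1+W)^p$ around $W=0$ to order five and show that the Lagrange remainder contributes only $O(d^{-3})$ in expectation, where $W := z_{1:d}/\mu_{1:d}-1$ and $Y := 1+W \ge 0$. Writing
$$(1+W)^p = \sum_{k=0}^{5} \binom{p}{k} W^k + R_6, \qquad R_6 = \binom{p}{6}(1+\theta W)^{p-6} W^6$$
for some random $\theta \in (0,1)$ and taking expectations, Proposition~\ref{prop:sixmoments} supplies the three nontrivial leading terms on the nose: the linear term vanishes because $\e(W)=0$; the $\binom{p}{4}\cdot O(d^{-3})$ error in $\e(W^4) = 3(\sigma^2_{1:d})^2/(\mu_{1:d})^4 + O(d^{-3})$ and the full term $\binom{p}{5}\e(W^5) = O(d^{-3})$, coming from~\eqref{eq:mom5}, both get absorbed into the claimed remainder.

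The real work is controlling $\e(R_6)$. Because $p<6$, the map $x\mapsto x^{p-6}$ is decreasing on $(0,\infty)$, so the supremum of $(1+\theta W)^{p-6}$ over $\theta\in(0,1)$ is $1$ when $W\ge 0$ and $(1+W)^{p-6}$ when $W<0$. This gives the pointwise bound
$$|R_6| \le \Bigl|\binom{p}{6}\Bigr|\,\bigl[W^6\,\mathbf{1}_{W\ge 0} + (1+W)^{p-6} W^6\,\mathbf{1}_{W<0}\bigr].$$
The $W\ge 0$ piece is $O(\e(W^6)) = O(d^{-3})$ directly from~\eqref{eq:mom6}. Splitting the $W<0$ piece at $W=-1/2$ cleans up another easy case: on $\{-1/2\le W<0\}$ we have $1+W\ge 1/2$, so $(1+W)^{p-6}\le 2^{6-p}$ and the contribution is a constant multiple of $\e(W^6) = O(d^{-3})$.

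The main obstacle is the singular regime $\{W<-1/2\}$, where $(1+W)^{p-6}$ can blow up as $z_{1:d}\to 0$. Here $|W|\le 1$ forces $W^6\le 1$, so the target reduces to $\e[(1+W)^{p-6}\mathbf{1}_{W<-1/2}] = O(d^{-3})$. The event is already rare --- Markov with the sixth central moment gives $P(W<-1/2) \le 2^6\,\e(W^6) = O(d^{-3})$ --- while Proposition~\ref{prop:constUpperBound} bounds negative-power moments of $Y$ uniformly in $d$. I would combine these with H\"older at conjugate exponents $r,s>1$,
$$\e\bigl[(1+W)^{p-6}\,\mathbf{1}_{W<-1/2}\bigr] \le \e\bigl[Y^{(p-6)r}\bigr]^{1/r}\,P(W<-1/2)^{1/s} \le \beta^{p-6}\,\bigl(C/d^3\bigr)^{1-1/r},$$
where the second inequality uses Proposition~\ref{prop:constUpperBound} with $q=(p-6)r<0$ (valid once $d \ge -(p-6)r/\alpha$). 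No fixed $r$ beats $O(d^{-3(1-1/r)})$, so I would let $r=r(d)\to\infty$ slowly, e.g.\ $r(d)=\log d$: the Proposition still applies for $d$ large enough, and $d^{3/r(d)} = e^{3}$ squeezes the bound down to $O(d^{-3})$. This growing-exponent trick is the delicate step; the rest is routine Taylor bookkeeping.
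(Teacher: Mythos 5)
Your proposal is correct, and it reaches the crucial $O(d^{-3})$ remainder bound by a genuinely different device than the paper. Both arguments start from the same fifth-order Taylor expansion with Lagrange remainder and read off the displayed terms from Proposition~\ref{prop:sixmoments}; the only real work is showing $\e\bigl[(1+\theta W)^{p-6}W^6\bigr]=O(d^{-3})$ with $W=z_{1:d}/\mu_{1:d}-1$, where the sixth power and the negative power are correlated. The paper decouples them structurally: it splits $z_{1:d}$ into the even- and odd-indexed halves $A_d$ and $B_d$ with weight $t_d$, uses convexity of $x\mapsto(x-1)^6$ to get $W^6\le(A_d-1)^6+(B_d-1)^6$ and monotonicity to bound $\theta^{p-6}$ by $1+t_d^{p-6}A_d^{p-6}$ or $1+(1-t_d)^{p-6}B_d^{p-6}$, pairing each sixth power with the negative power of the \emph{other}, independent half, so each expectation factors into an $O(d^{-3})$ term from~\eqref{eq:mom6} times an $O(1)$ term from Proposition~\ref{prop:constUpperBound}. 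You instead split on the value of $W$: the regime $W\ge-1/2$ costs only a constant multiple of $\e(W^6)=O(d^{-3})$, and on the singular event $\{W<-1/2\}$ (where $W^6\le1$) you combine the Markov bound $\Pr(W<-1/2)\le 2^6\e(W^6)=O(d^{-3})$ with H\"older at a slowly growing exponent $r(d)=\log d$, invoking Proposition~\ref{prop:constUpperBound} with the $d$-dependent exponent $(p-6)r(d)$ --- which is legitimate, since its bound $\beta^{q}$ is uniform once $d\ge -q/\alpha$, a condition $r(d)=\log d$ satisfies for all large $d$ --- so that the loss $d^{3/r(d)}=e^{3}$ stays bounded and the term is genuinely $O(d^{-3})$ rather than $O(d^{-3+3/r})$ for fixed $r$. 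Both routes use the same two ingredients (sixth central moments of sums, uniform negative moments) and both yield implied constants depending only on~\eqref{eq:boundedmean}, \eqref{eq:negmoment} and~\eqref{eq:sixmomentbounds}, so Remark~\ref{rem:momentasymptote} survives either way; the paper's independence split is arguably cleaner and keeps the constants more transparent, while your tail-bound-plus-H\"older argument is more off-the-shelf and avoids the even/odd bookkeeping, at the price of the growing-exponent subtlety you correctly flagged as the delicate step.
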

\begin{proof} 
Using a fifth order Taylor expansion
we get
\begin{align*}
\Bigl( \frac{z_{1:d}}{\mu_{1:d}} \Bigr)^p
&= \sum_{k=0}^5 \frac{ (p)_k }{ k! } \Bigl( \frac{z_{1:d}}{\mu_{1:d}} - 1 \Bigr)^k
+ \frac{ (p)_6 }{ 6! } \Bigl( \frac{z_{1:d}}{\mu_{1:d}} - 1 \Bigr)^6 \cdot  \theta^{p-6}
\end{align*}
for some $\theta$ between 1 and $z_{1:d} / \mu_{1:d} $.
Using the results in Proposition~\ref{prop:sixmoments}
we find that the expected value of the 
sum for $0\le k\le 5$ is
\begin{align*}
	1
	+ \Bigl( \frac{ (p)_2 }{2!} \cdot \frac{ \sigma_{1:d}^2 }{ (\mu_{1:d})^2} \Bigr) 
	+ \Bigl( \frac{ (p)_3 }{3!} \cdot \frac{ \mu^{(3)}_{1:d} }{ (\mu_{1:d})^3} 
	+ \frac{ (p)_4 }{4!} \cdot\frac{3 (\sigma_{1:d}^2)^2 }{(\mu_{1:d})^4} \Bigr) +O(d^{-3}).
\end{align*}
It remains to show that the remainder
term with $k=6$ is $O(d^{-3})$.

We can assume that $d\ge2$ and then define
$$ A_d = \frac{ \sum_{j \in [d], \text{even}} z_j }{ \sum_{j \in [d], \text{even}} \mu_j }, \quad
B_d = \frac{ \sum_{j \in [d], \text{odd}} z_j }{ \sum_{j \in [d], \text{odd}} \mu_j }, \quad\text{and}\quad
t_d = \frac{ \sum_{j \in [d], \text{even}} \mu_j }{ \mu_{1:d} }.
$$
Here, $A_d$ and $B_d$ are independent random variables, $t_d\in(0,1)$ is non-random, and $z_{1:d} / \mu_{1:d} = t_d A_d + (1-t_d) B_d$. Because $\phi(x) = (x-1)^6$ is a convex function, 
$$\Bigl(\frac{z_{1:d}}{\mu_{1:d}} - 1\Bigr)^6\le t_d (A_d-1)^6 + (1-t_d) (B_d-1)^6 \leq (A_d-1)^6 + (B_d-1)^6.$$

Next, $\theta$ is between 1 and $z_{1:d} / \mu_{1:d}$, and so $\theta^{p-6}\le1 + (z_{1:d} / \mu_{1:d})^{p-6}$. Since $t_d A_d$ and $(1-t_d) B_d$ are both lower bounds for $z_{1:d} / \mu_{1:d}$,
we can take either $1 + t_d^{p-6} A_d^{p-6}$ or $1 + (1-t_d)^{p-6} B_d^{p-6}$ as an upper bound for $\theta^{p-6}$.

Because the exponent $p-6$ is negative
we will need to 
bound $t_d$ away from zero below.  
Using upper and lower bounds on $\mu_j$
we know that
$$
t_d\ge \frac{((d/2)-1)\umu}{d\omu}.
$$
That lower bound is strictly positive for $d=3$
and it increases with $d$,
so $t_d^{p-6}=O(1)$.
A similar argument shows that $(1-t_d)^{p-6}=O(1)$ too,
and so $\max(t_d^{p-6},(1-t_d)^{p-6})\le C$
for some $C<\infty$ and all $d\ge3$.
Therefore, we find that for $d$ large enough 
\begin{align*}
&\e\Bigl[ \Bigl(\frac{z_{1:d}}{\mu_{1:d}} - 1\Bigr)^6 \theta^{p-6}\Bigr]\\
&\leq \e\big[ \big( (A_d-1)^6 + (B_d-1)^6 \big) \theta^{p-6} \big] \\
&\leq \e \big[ (A_d-1)^6 \cdot ( 1 + t_d^{p-6} B_d^{p-6} ) + (B_d-1)^6 \cdot ( 1 + (1-t_d)^{p-6} A_d^{p-6} ) \big] \\
&\leq \e[ (A_d-1)^6] \cdot (1 + C\cdot\e[B_d^{p-6}]) + \e[ (B_d-1)^6] \cdot (1 + C\cdot\e[A_d^{p-6}]). 
\end{align*}
Now both $\e((A_d-1)^6)$ and $\e((B_d-1)^6))$
are $O(d^{-3})$ by~\eqref{eq:mom6} of Proposition~\ref{prop:sixmoments}
and $\max(\e[A_d^{p-6}],\e[B_d^{p-6}])
\le \beta^{p-6} =O(1)$ by Proposition~\ref{prop:constUpperBound}.
We also note that $(z_{1:d} / \mu_{1:d} - 1)^6 \theta^{p-6}$ is non-negative, so the expectation is bounded below by zero. Therefore, 
$\e( (z_{1:d} / \mu_{1:d} - 1)^6 \theta^{p-6})=O(d^{-3})$ as required.
\end{proof}

\begin{remark}\label{rem:momentasymptote}
The implied constant in the $O(d^{-3})$ error term depends only on the constants
in bounds~\eqref{eq:boundedmean}, \eqref{eq:negmoment} and \eqref{eq:sixmomentbounds}.
\end{remark}

\section{Convergence rates for multiquadrics}\label{sec:rates}

In this section we have the main background
results to support our finding that
$\nu(f) = 1+O(1/d)$ for generalized multiquadric
RBFs under moment conditions.

For the next result we use falling factorial
notation $(p)_k=p(p-1)\cdots(p-k+1)$ where $p$
need not be an integer and $k\ge0$ is an integer.

\begin{corollary}\label{cor:varfforrate}
Suppose $p<1$ and that the first 6 central moments of the $z_j$ exist and are bounded in magnitude. The asymptotic behavior of $\var((z_{1:d} / \mu_{1:d})^p)$ is
$$ 
\frac{ p^2 \sigma_{1:d}^2 }{ (\mu_{1:d})^2 } \Bigl( 
	1
	+ (p-1) \cdot \frac{ \mu^{(3)}_{1:d} }{ \sigma^2_{1:d} \mu_{1:d} } 
	+ \frac{1}{2} (p-1)(3p-5) \cdot \frac{\sigma^2_{1:d}}{(\mu_{1:d})^2} 
	+ O(d^{-2}) \Bigr). 
$$
\end{corollary}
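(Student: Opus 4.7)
The plan is to compute $\var(f) = \e[f^2] - (\e[f])^2$ with $f = (z_{1:d}/\mu_{1:d})^p$, applying Proposition~\ref{prop:pmomentwith6} to each moment. The hypothesis $p < 1$ gives $2p < 6$, so the proposition applies equally to $\e[f^2] = \e[(z_{1:d}/\mu_{1:d})^{2p}]$ (with every $(p)_k$ replaced by $(2p)_k$). Writing $S = \sigma^2_{1:d}/(\mu_{1:d})^2$, $T = \mu^{(3)}_{1:d}/(\mu_{1:d})^3$, and $U = 3(\sigma^2_{1:d})^2/(\mu_{1:d})^4 = 3S^2$, the proposition yields
$$
\e[f] = 1 + \tfrac{(p)_2}{2}S + \tfrac{(p)_3}{6}T + \tfrac{(p)_4}{24}U + O(d^{-3}),
$$
and the analogous expansion for $\e[f^2]$.

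Under the standing hypotheses $S = \Theta(1/d)$ while $T, U = O(1/d^2)$. Squaring the first expansion, the only cross-product surviving at order $d^{-2}$ is $(p)_2^2 S^2/4$; mixed products involving $T$ or $U$ against anything nontrivial are already $O(d^{-3})$ and get absorbed into the remainder. Subtracting $(\e[f])^2$ from $\e[f^2]$ and collecting by $S$, $T$, and $S^2$ then gives
$$
\var(f) = \frac{(2p)_2 - 2(p)_2}{2}S + \frac{(2p)_3 - 2(p)_3}{6}T + \Bigl[\frac{(2p)_4 - 2(p)_4}{8} - \frac{(p)_2^2}{4}\Bigr]S^2 + O(d^{-3}).
$$

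What remains is routine falling-factorial algebra. Direct expansion gives $(2p)_2 - 2(p)_2 = 2p^2$, $(2p)_3 - 2(p)_3 = 6p^2(p-1)$, and $(2p)_4 - 2(p)_4 = 2p^2(7p^2 - 18p + 11)$, so the $S^2$ bracket reduces to $\tfrac{p^2}{4}\bigl[(7p^2 - 18p + 11) - (p-1)^2\bigr] = \tfrac{p^2(p-1)(3p-5)}{2}$, via the identity $7p^2 - 18p + 11 - (p-1)^2 = 2(p-1)(3p-5)$. Pulling out the common factor $p^2 S = p^2 \sigma^2_{1:d}/(\mu_{1:d})^2$ and using $T/S = \mu^{(3)}_{1:d}/(\sigma^2_{1:d}\mu_{1:d})$ reproduces the stated expression; and since $S = \Theta(1/d)$, dividing the absolute error $O(d^{-3})$ by the leading factor converts it into the relative error $O(d^{-2})$ shown inside the parentheses.

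The main obstacle is the accounting rather than any individual step: I must keep straight which products among $S$, $T$, $U$ contribute at the $O(d^{-2})$ level inside the parentheses (only $S^2$ does) versus which fall into the $O(d^{-3})$ remainder, and in particular recognize that the neat coefficient $(p-1)(3p-5)/2$ emerges only from combining the $U$-term (which equals $3S^2$) with the $(p)_2^2 S^2/4$ contribution coming from squaring $\e[f]$. The falling-factorial identities themselves are mechanical.
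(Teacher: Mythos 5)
Your proposal is correct and follows exactly the paper's route: apply Proposition~\ref{prop:pmomentwith6} with exponents $p$ and $2p$, subtract the squared mean from the second moment, and simplify, noting that only the $S^2$ cross-term survives at the $O(d^{-2})$ level. Your falling-factorial computations ($2p^2$, $6p^2(p-1)$, and the reduction of the $S^2$ bracket to $p^2(p-1)(3p-5)/2$) check out, so you have simply made explicit the algebra the paper summarizes as ``after some algebra.''
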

\begin{proof} 
Because $p<1$ we have both $p<6$
and $2p<6$. So we can use Proposition~\ref{prop:pmomentwith6}
with exponents $p$ and $2p$ to 
write $\var( (z_{1:d}/\mu_{1:d})^p)$ as
\begin{align*}
&\e\Bigl[ \Bigl( \frac{z_{1:d}}{\mu_{1:d}} \Bigr)^{2p} \Bigr] - \e\Bigl[\Bigl( \frac{z_{1:d}}{\mu_{1:d}} \Bigr)^p\Bigr]^2 \\
&= \biggl( 
	1
	+ \frac{ (2p)_2 }{2!} \cdot\frac{ \sigma_{1:d}^2 }{(\mu_{1:d})^2}
	+ \frac{ (2p)_3 }{3!} \cdot \frac{ \mu^{(3)}_{1:d} }{ (\mu_{1:d})^3}
	+ \frac{ (2p)_4 }{4!} \cdot \frac{3 (\sigma_{1:d}^2)^2 }{(\mu_{1:d})^4}
	+ O(d^{-3}) \biggr) \\
& \qquad - \biggl( 
	1
	+ \frac{ (p)_2 }{2!} \cdot \frac{ \sigma_{1:d}^2 }{(\mu_{1:d})^2}
	+ \frac{ (p)_3 }{3!} \cdot \frac{ \mu^{(3)}_{1:d} }{(\mu_{1:d})^3}
	+ \frac{ (p)_4 }{4!} \cdot \frac{ 3 (\sigma_{1:d}^2)^2 }{(\mu_{1:d})^4}
	+ O(d^{-3})
\biggr)^2 \\
&= \frac{ p^2 \sigma_{1:d}^2 }{ (\mu_{1:d})^2 } \biggl( 
	1
	+ (p-1) \cdot \frac{ \mu^{(3)}_{1:d} }{ \sigma_{1:d}^2 \mu_{1:d} } 
	+ \frac{1}{2} (p-1)(3p-5) \cdot \frac{ \sigma_{1:d}^2 }{(\mu_{1:d})^2} 
	+ O(d^{-2}) \biggr)
\end{align*}
after some algebra.
\end{proof}

\begin{proposition}\label{prop:upperboundsumtausq}
Let $z_j\ge0$ be independent random variables satisfying
the sixth moment condition~\eqref{eq:sixmomentbounds}
and the mean condition~\eqref{eq:boundedmean}.
Then for $p<1$
$$ \sum_{j=1}^d \olt_j^2 
\leq \frac{p^2 \sigma_{1:d}^2}{(\mu_{1:d})^2} \Bigl( 1 + (p-1)(2p-3) \frac{\sigma_{1:d}^2}{(\mu_{1:d})^2}
+ (p-1) \frac{ \mu^{(3)}_{1:d}}{ \mu_{1:d} \sigma_{1:d}^2 } + O(d^{-2}) \Bigr).$$
\end{proposition}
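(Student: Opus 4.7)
The proof will go through the Jansen identity \eqref{eq:jansens} combined with a Taylor expansion in $w_j=z_j-\mu_j$ and the moment estimates of Appendix~\ref{sec:summoments}. Write $f(\bsz)=g(z_{1:d})$ with $g(s)=(s/\mu_{1:d})^p$ and set $S_{-j}=\sum_{i\neq j}z_i=\mu_{-j}+Y_j$, where $Y_j$ is the centered partial sum and is independent of $z_j,z'_j$. Since only the $j$th coordinate differs between $\bsz$ and $\bsz_{-j}{:}\bsz'_j$,
\[ 2\olt_j^2 = \e\bigl[(g(S_{-j}+z'_j)-g(S_{-j}+z_j))^2\bigr]=2\,\e\bigl[\var(g(z_{1:d})\giv\bsz_{-j})\bigr], \]
so I first integrate out $z_j$ for fixed $Y_j$ and then average in $Y_j$.

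For fixed $Y_j$, I would Taylor-expand $g(\mu_{1:d}+Y_j+w)$ in $w=z_j-\mu_j$ about $w=0$. Using $\e w=0$, $\e w^2=\sigma_j^2$, $\e w^3=\mu_j^{(3)}$, computing the variance over $z_j$ of the resulting polynomial gives
\[ \var(g\giv\bsz_{-j})=g'(\mu_{1:d}+Y_j)^2\sigma_j^2+g'(\mu_{1:d}+Y_j)g''(\mu_{1:d}+Y_j)\mu_j^{(3)}+\epsilon_j(Y_j), \]
where $\epsilon_j$ collects fourth-and-higher moment contributions in $w$. Since $g'(s)^2=(p^2/\mu_{1:d}^2)(s/\mu_{1:d})^{2p-2}$ and $g'(s)g''(s)=(p^2(p-1)/\mu_{1:d}^3)(s/\mu_{1:d})^{2p-3}$, taking the outer expectation reduces matters to $\e[(1+Y_j/\mu_{1:d})^{2p-2}]$ and $\e[(1+Y_j/\mu_{1:d})^{2p-3}]$. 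I would Taylor-expand $(1+u)^q$ in $u$ and apply Proposition~\ref{prop:sixmoments} to the $(d{-}1)$-index subsum $Y_j$, giving $\e[(Y_j/\mu_{1:d})^2]=\sigma^2_{-j}/\mu_{1:d}^2$ and $\e[(Y_j/\mu_{1:d})^k]=O(d^{-2})$ for $3\le k\le 6$, which yields
\[ \e[(1+Y_j/\mu_{1:d})^{2p-2}]=1+(p-1)(2p-3)\tfrac{\sigma^2_{-j}}{\mu_{1:d}^2}+O(d^{-2}),\ \ \e[(1+Y_j/\mu_{1:d})^{2p-3}]=1+O(d^{-1}). \]

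Summing over $j$, the $\sigma_j^2$ piece contributes $p^2\sigma^2_{1:d}/\mu_{1:d}^2+p^2(p-1)(2p-3)\sum_j\sigma_j^2\sigma^2_{-j}/\mu_{1:d}^4+O(d^{-3})$, and the identity $\sum_j\sigma_j^2\sigma^2_{-j}=(\sigma^2_{1:d})^2-\sum_j\sigma_j^4$ together with $(p-1)(2p-3)>0$ for all $p<1$ upper-bounds this second term by $p^2(p-1)(2p-3)(\sigma^2_{1:d})^2/\mu_{1:d}^4$. The $\mu_j^{(3)}$ piece contributes $p^2(p-1)\mu^{(3)}_{1:d}/\mu_{1:d}^3+O(d^{-3})$. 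Factoring $p^2\sigma^2_{1:d}/\mu_{1:d}^2$ out of the total rearranges these into the bound in the statement.

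\textbf{The hard part} will be justifying the Lagrange remainders when $p<0$, since $g^{(k)}(s)\sim s^{p-k}$ blows up as $s\downarrow 0$ and the exponents $2p-2$, $2p-3$ are then negative, so the Taylor remainder of $(1+u)^q$ involves expectations of negative powers of $1+Y_j/\mu_{1:d}$. This is where the negative-moment assumption~\eqref{eq:negmoment} and Proposition~\ref{prop:constUpperBound} come in: they bound $\e[(1+Y_j/\mu_{1:d})^{-r}]$ uniformly in $d$, just as in the proof of Proposition~\ref{prop:pmomentwith6}. A secondary bookkeeping step is verifying that every piece absorbed into the $O(d^{-2})$ inside the bracket (fourth-moment variance corrections, higher-order $Y_j/\mu_{1:d}$ terms, and diagonal leave-one-out sums such as $\sum_j\sigma_j^4$ or $\sum_j\sigma_j^2\mu_j^{(3)}$) is $O(d)$ before dividing by the relevant power of $\mu_{1:d}$, so each contributes at most $O(d^{-3})$ to $\sum_j\olt_j^2$.
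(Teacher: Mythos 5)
Your plan follows the paper's own proof essentially step for step: the paper likewise starts from $\olt_j^2=\e(\var(f(\bsz)\giv\bsz_{-j}))$, Taylor-expands $(z_{1:d}/\mu_{1:d})^p$ in $z_j-\mu_j$ about the same base point $S_j=(z_{1:d}-z_j+\mu_j)/\mu_{1:d}$ (your $1+Y_j/\mu_{1:d}$), extracts the leading contributions $g'^2\sigma_j^2$ and $g'g''\mu_j^{(3)}$ through $\e(S_j^{2p-2})$ and $\e(S_j^{2p-3})$ computed from Proposition~\ref{prop:pmomentwith6} (equivalently your $(1+u)^q$ expansion via Proposition~\ref{prop:sixmoments}), and handles what you call ``the hard part'' exactly as you propose, via a Lagrange remainder bounded with the sixth-moment constant $\lambda$ and the negative-moment machinery of \eqref{eq:negmoment} and Proposition~\ref{prop:constUpperBound} (a standing assumption in that section even though it is not restated in the proposition). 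So the proposal is correct in outline and essentially identical in approach; what remains is only the routine explicit bounding of the higher-order variance and covariance terms (the paper's $T_2$, $T_3$ contributions), which proceeds exactly along the lines you sketch.
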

\begin{proof}
For each $j\in[d]$ we form a Taylor expansion
of $(z_{1:d}/\mu_{1:d})^p$ in powers
of $z_j-\mu_j$ as follows
\begin{align} 
&S_j^p + \frac{p}{\mu_{1:d}} S_j^{p-1} (z_j-\mu_j) + \frac{(p)_2}{2 (\mu_{1:d})^2 } S_j^{p-2} (z_j-\mu_j)^2 + \frac{ (p)_3}{6 (\mu_{1:d})^3 } (S_j')^{p-3} (z_j-\mu_j)^3\notag\\
&=: T_0+T_1+T_2+T_3\label{eq:deftk}
\end{align}
where
\begin{align*}
S_j &= \frac{ (z_{1:d} - z_j) + \mu_j }{ \mu_{1:d} }, 
\quad\text{and}\quad S_j' = \frac{ (z_{1:d} - z_j) + \theta }{ \mu_{1:d} } \notag
\end{align*}
for some $\theta$ between $\mu_j$ and $z_j$.

Now $\ult^2_j = \e(\var( (z_{1:d}/\mu_{1:d})^p\giv\bsx_{-j})$
so we begin by bounding the conditional variances of 
the terms $T_k$ defined at equation~\eqref{eq:deftk}.
Because $S_j$ is a function of $\bsz_{-j}$,
$\var(T_0\giv\bsx_{-j})=\var(S_j\giv\bsz_{-j})=0$.
Similarly
$$
\var(T_1\giv\bsz_{-j}) = \frac{p^2}{(\mu_{1:d})^2}S_j^{2p-2}\sigma^2_j.
$$
Next, noting that $\var( (z_j-\mu_j)^2\giv\bsz_{-j})
=\var( (z_j-\mu_j)^2)\le \mu^{(4)}_j\le\lambda$,
\begin{align*}
\var(T_2\giv\bsx_{-j})
&=\frac{(p)_2^2}{4(\mu_{1:d})^4}
S_j^{2p-2}
\var((z_j-\mu_j)^2\giv\bsz_{-j})
\le\frac{(p)_2^2}{4(\mu_{1:d})^4}S_j^{2p-2}\lambda.
\end{align*}
Turning to the one term with $S'_j$
\begin{align*}
\var( T_3\giv\bsz_{-j}) &\le
\frac{(p)_3^2}{36(\mu_{1:d})^6}
\e( (S'_j)^{2p-6}(z_j-\mu_j)^6\giv\bsz_{-j})\\
&\le 
\frac{(p)_3^2}{36(\mu_{1:d})^6}
\e\Bigl( 
\Bigl(\frac{z_{1:d}-z_j}{\mu_{1:d}}\Bigr)^{2p-6}
(z_j-\mu_j)^6\giv\bsz_{-j}\Bigr)\\
&\le 
\frac{(p)_3^2}{36(\mu_{1:d})^6}
\Bigl(\frac{z_{1:d}-z_j}{\mu_{1:d}}\Bigr)^{2p-6}
\lambda.
\end{align*}

With the above decomposition, we write
\begin{align*}
\olt^2_j&\le\e( \var(T_1\giv\bsz_{-j}))
+\e( \var(T_2\giv\bsz_{-j}))
+\e( \var(T_3\giv\bsz_{-j}))\\
&+2\e( \cov(T_1,T_2\giv\bsz_{-j}))
+2\e( \cov(T_1,T_3\giv\bsz_{-j}))
+2\e( \cov(T_2,T_3\giv\bsz_{-j})).
\end{align*}

Proposition~\ref{prop:pmomentwith6} shows that
for $q<0$ and large enough $d$
\begin{align*}
\e[ S_j^q ]=
\e\Bigl[ \Bigl(\frac{z_{1:d}-z_j + \mu_j }{ \mu_{1:d}}\Bigr)^q\Bigr]&=1+\frac{q(q-1)}2
\frac{\sigma^2_{1:d}-\sigma^2_j}{(\mu_{1:d})^2}
+O\Bigl(\frac1{d^2}\Bigr)\\
&=1+\frac{q(q-1)}2
\frac{\sigma^2_{1:d}}{(\mu_{1:d})^2}
+O\Bigl(\frac1{d^2}\Bigr).
\end{align*}
In this application of Proposition~\ref{prop:pmomentwith6}, the variable $z_j$ with variance $\sigma^2_j$ is replaced by
$\mu_j$ with variance $0$. That proposition does
not assume strictly positive $\sigma^2_j$.
Note that the implied constant within  $O(1/d^2)$ depends only on moment conditions 
from Remark~\ref{rem:momentasymptote}.
and can be bounded independently of $j$. 

Also
\begin{align*}
0\le\e\Bigl[S_j^{p-1}
\Bigl(\frac{z_{1:d}-z_j}{\mu_{1:d}}\Bigr)^{p-3}
\Bigr]
\le\e[ S_j^{2p-4}]\Bigl(1+\frac{\omu}{(d-1)\umu}\Bigr)^{3-p}
=1+O\Bigl(\frac1d\Bigr).
\end{align*}
Then the expected variances are
\begin{align*}
\e(\var(T_1\giv\bsz_{-j}))&=
\Bigl(\frac{p}{\mu_{1:d}}\Bigr)^2\sigma^2_j
\e( S_j^{2p-2})\\
&\le\Bigl(\frac{p}{\mu_{1:d}}\Bigr)^2\sigma^2_j
\Bigl(
1+\frac{(2p-2)(2p-3)}2
\frac{\sigma^2_{1:d}}{(\mu_{1:d})^2}+O\Bigl(\frac1{d^2}\Bigr)
\Bigr), \\
\e(\var(T_2\giv\bsz_{-j}))&\le
\Bigl(\frac{(p)_2}{2(\mu_{1:d})^2}\Bigr)^2
\e( S_j^{2p-2})\lambda =O\Bigl(\frac1{d^4}\Bigr),\quad\text{and}\\
\e(\var(T_3\giv\bsz_{-j})) &
\le\frac{(p)_3^2}{36(\mu_{1:d})^6}
\e\Bigl(\Bigl(\frac{z_{1:d}-z_j}{\mu_{1:d}-\mu_j}\Bigr)^{2p-6}\Bigr)
\lambda=O\Bigl(\frac1{d^6}\Bigr).
\end{align*}
Because $S_j$ is a function of $\bsz_{-j}$,
\begin{align*}
\cov(T_1,T_2\giv\bsz_{-j})
&= \frac{p(p)_2}{2(\mu_{1:d})^3}S_j^{2p-3}\cov(z_j-\mu_j,
(z_j-\mu_j)^2)
\\&
=
\frac{p(p)_2}{2(\mu_{1:d})^3}S_j^{2p-3}\mu^{(3)}_j,
\end{align*}
and so
\begin{align*}
\e(\cov(T_1,T_2\giv\bsz_{-j}))
&=\frac{p(p)_2}{2(\mu_{1:d})^3}\e(S_j^{2p-3})\mu^{(3)}_j\\
&=
\frac{p(p)_2}{2(\mu_{1:d})^3}
\Bigl(1 + O\Bigl(\frac1{d}\Bigr)\Bigr)\mu^{(3)}_j.
\end{align*}
Similarly
\begin{align*}
\cov(T_1,T_3\giv\bsz_{-j})
&\le \frac{p(p)_3}{6(\mu_{1:4})^4}S_j^{p-1}
\Bigl(\frac{z_{1:d}-z_j}{\mu_{1:d}}\Bigr)^{p-3}\lambda,
\quad\text{and}\\
\cov(T_2,T_3\giv\bsz_{-j})
&\le \frac{p(p)_2}{12(\mu_{1:4})^5}S_j^{p-1}
\Bigl(\frac{z_{1:d}-z_j}{\mu_{1:d}}\Bigr)^{p-3}\lambda.
\end{align*}
so that
\begin{align*}
\e[\cov(T_1,T_3\giv\bsz_{-j})]
=O\Bigl(\frac1{d^4}\Bigr)\quad\text{and}\quad
\e[\cov(T_2,T_3\giv\bsz_{-j})]
=O\Bigl(\frac1{d^5}\Bigr).
\end{align*}

Combining all of our bounds
\begin{align*}
\olt_j^2
&\le 
\Bigl(\frac{p}{\mu_{1:d}}\Bigr)^2\sigma^2_j
\Bigl(
1+(2p-1)(p-3)\frac{\sigma^2_{1:d}}{(\mu_{1:d})^2}
+O\Bigl(\frac1{d^2}\Bigr)\Bigr)\\
&\phe+\frac{p(p)_2}{2(\mu_{1:d})^3}\mu_j^{(3)}
+O\Bigl(\frac1{d^4}\Bigr).
\end{align*}
The implied constants in both $O(\cdot)$
expressions above can be chosen
independently of $j$ from Remark~\ref{rem:momentasymptote}.
Then summing over $j$ yields
$$
\sum_{j=1}^d\olt^2_j
\le
\frac{p^2\sigma^2_{1:d}}{(\mu_{1:d})^2}
\Bigl(
1+(2p-1)(p-3)\frac{\sigma^2_{1:d}}{(\mu_{1:d})^2}
+\frac{p-1}2\frac{\mu_{1:d}^{(3)}}{\mu_{1:d}\sigma^2_{1:d}}
\Bigr)+O\Bigl(\frac1{d^{3}}\Bigr).
$$
\end{proof}

\section{Mean dimension
approaching one}\label{sec:boundednu}

Here we prove the Lemmas needed for
Theorem~\ref{thm:limisone}.
We have a subsection to prove upper
bounds on Sobol' indices and another
for lower bounds on the variance.

\subsection{Sobol' index upper bounds}\label{sec:sobolupper}
Here we find upper bounds for the Sobol' indices 
$\olt^2_j$ that form the numerator of $\nu(f)$.
We will need some properties of
\begin{align}\label{eq:deftd}
T_d := \left( \frac{z_{1:d} - z_J }{\mu_{1:d}} \right)^p,
\end{align}
where $J\in[d]$ is a random index with
\begin{align}\label{eq:rindex}
\Pr(J=j) = \frac{\sigma_j^2}{\sigma_{1:d}^2},\quad 1\le j\le d,
\end{align}
chosen independently of $\bsz$.
In particular, we need to show that $\e( |T_d-1|)\to0$
as $d\to\infty$.

\begin{proposition}\label{prop:boundSJmoments}
Let independent $z_j\ge0$ 
satisfy the lower bound condition~\eqref{eq:boundedmean}
for some $\underline{\mu}>0$,
the variance bounds~\eqref{eq:boundedvar}
and the negative moment condition~\eqref{eq:negmoment} for
some $\alpha>0$ and $M_\alpha<\infty$ for all $j=1,\dots,d$.
If the random index $J$ 
satisfies~\eqref{eq:rindex} and is independent of $\bsz$, 
then for $d>1$
$$\e\biggl( \Bigl( \frac{z_{1:d} - z_J}{\mu_{1:d}}\Bigr)^{-\alpha (d-1)} \biggr) 
\leq \beta^{-\alpha (d-1) } e^{\alpha}.
$$
where $\beta$ is given at equation~\eqref{eq:quantitybeta}.
\end{proposition}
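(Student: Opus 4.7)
The plan is to condition on the random index $J$, reduce to a negative moment of a sum of $d-1$ independent variables, apply Proposition~\ref{prop:constUpperBound} to that sum, and finally absorb the mismatch between normalizing by $\mu_{1:d}$ and normalizing by $\mu_{1:d}-\mu_j$ into the $e^\alpha$ factor.

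The first step is to use the independence of $J$ and $\bsz$ and the specific form of $\Pr(J=j)$ to write
$$
\e\!\left[\left(\frac{z_{1:d}-z_J}{\mu_{1:d}}\right)^{-\alpha(d-1)}\right]
=\sum_{j=1}^{d}\frac{\sigma_j^2}{\sigma_{1:d}^2}\,
\left(\frac{\mu_{1:d}-\mu_j}{\mu_{1:d}}\right)^{-\alpha(d-1)}
\e\!\left[\left(\frac{z_{1:d}-z_j}{\mu_{1:d}-\mu_j}\right)^{-\alpha(d-1)}\right].
$$
For each fixed $j$, the random quantity $z_{1:d}-z_j=\sum_{i\neq j}z_i$ is a sum of the $d-1$ independent nonnegative variables $\{z_i\}_{i\neq j}$, whose means are bounded below by $\umu$ and whose $\alpha$-negative moments are bounded by $M_\alpha$. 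I would therefore apply Proposition~\ref{prop:constUpperBound} to this restricted collection with exponent $p=-\alpha(d-1)$: its hypothesis $d-1\ge -p/\alpha=d-1$ is satisfied (with equality), and the value of $\beta$ produced by the proposition is identical to the $\beta$ in~\eqref{eq:quantitybeta} because the bounds $\umu$ and $M_\alpha$ are uniform in $j$. This yields $\e[((z_{1:d}-z_j)/(\mu_{1:d}-\mu_j))^{-\alpha(d-1)}]\le \beta^{-\alpha(d-1)}$.

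Substituting back, the desired inequality reduces to
$$
\sum_{j=1}^{d}\frac{\sigma_j^2}{\sigma_{1:d}^2}\left(\frac{\mu_{1:d}}{\mu_{1:d}-\mu_j}\right)^{\!\alpha(d-1)} \le e^{\alpha}.
$$
I would bound each factor using $(1+t)^{n}\le e^{nt}$ with $t=\mu_j/(\mu_{1:d}-\mu_j)$ and $n=\alpha(d-1)$, writing the exponent in the structurally clean form $(d-1)\mu_j/(\mu_{1:d}-\mu_j)=\mu_j/\bar\mu_{(-j)}$, where $\bar\mu_{(-j)}=(\mu_{1:d}-\mu_j)/(d-1)$ is the arithmetic mean of the $d-1$ other means. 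Because the weights $\sigma_j^2/\sigma_{1:d}^2$ sum to one, combining this factor with the $\beta^{-\alpha(d-1)}$ bound from the previous step delivers the claimed estimate.

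The main obstacle is the last step: turning the weighted sum of exponentials into exactly $e^\alpha$. A crude bound $\mu_j/\bar\mu_{(-j)}\le \omu/\umu$ for every term would only yield $e^{\alpha\omu/\umu}$, so to reach $e^\alpha$ one must exploit the averaging over $j$ rather than passing to a worst-case term — in particular, recognising that $\mu_j/\bar\mu_{(-j)}$ is centred near $1$ because the $\mu_j$ enter the identity $\sum_j\mu_j=\mu_{1:d}$ symmetrically. Handling this carefully (e.g., peeling off the leading constant using $(1+1/(d-1))^{d-1}\le e$ on the common ``diagonal'' contribution and controlling the residual fluctuations through the uniform mean bounds) is what produces the constant $e^{\alpha}$ advertised in the statement.
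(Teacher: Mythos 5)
Your first two steps are sound and close in spirit to the paper: conditioning on $J$ is harmless, and applying the argument of Proposition~\ref{prop:constUpperBound} (AM--GM plus Jensen plus independence) to the $d-1$ variables $\{z_i\}_{i\ne j}$ with exponent $-\alpha(d-1)$ does give $\e\bigl[((z_{1:d}-z_j)/(\mu_{1:d}-\mu_j))^{-\alpha(d-1)}\bigr]\le\beta^{-\alpha(d-1)}$. The gap is in the last step, and it is not merely a technicality you left open: the inequality you reduce to,
$$
\sum_{j=1}^{d}\frac{\sigma_j^2}{\sigma_{1:d}^2}\Bigl(\frac{\mu_{1:d}}{\mu_{1:d}-\mu_j}\Bigr)^{\alpha(d-1)}\le e^{\alpha},
$$
is false in general. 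Each term behaves like $e^{\alpha\mu_j/\bar\mu_{(-j)}}$ for large $d$, and nothing forces the exponent toward $1$: condition~\eqref{eq:boundedvar} has no lower bound tying $\sigma_j^2$ to $\osigsq$, so the weight can concentrate on an index with $\mu_j$ near $\omu$ while the remaining means sit near $\umu$, making the sum tend to $e^{\alpha\omu/\umu}>e^{\alpha}$. Averaging does not rescue it either: with equal variance weights and half the means at $\omu$, half at $\umu$, the limit of the sum is $\tfrac12\bigl(e^{2\alpha\omu/(\omu+\umu)}+e^{2\alpha\umu/(\omu+\umu)}\bigr)$, which exceeds $e^{\alpha}$ by Jensen whenever $\omu\ne\umu$. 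So the ``centred near $1$'' heuristic cannot produce the constant $e^{\alpha}$; that constant does not come from the means at all.

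The repair is to avoid renormalizing by $\mu_{1:d}-\mu_j$, which is what the paper does. For each fixed $j$ (or directly with the random $J$), write
$$
\e\Bigl[\Bigl(\frac{z_{1:d}-z_j}{\mu_{1:d}}\Bigr)^{-\alpha(d-1)}\Bigr]
=\Bigl(\frac{d-1}{\mu_{1:d}}\Bigr)^{-\alpha(d-1)}
\e\Bigl[\Bigl(\frac{z_{1:d}-z_j}{d-1}\Bigr)^{-\alpha(d-1)}\Bigr]
\le \Bigl(\frac{d-1}{d\umu}\Bigr)^{-\alpha(d-1)}M_\alpha^{\,d-1},
$$
using AM--GM on the $d-1$ summands, independence, and $\mu_{1:d}\ge d\umu$. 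The right side equals $\beta^{-\alpha(d-1)}\bigl(\tfrac{d}{d-1}\bigr)^{\alpha(d-1)}\le\beta^{-\alpha(d-1)}e^{\alpha}$, uniformly in $j$, and averaging over $J$ finishes the proof. In other words, the $e^{\alpha}$ in the statement is the price of replacing $d$ by $d-1$ in the normalization, via $(1+1/(d-1))^{d-1}\le e$, not a quantity to be extracted from the spread of the $\mu_j$.
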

\begin{proof}
Directly, we find that:
\begin{align*}
\e\left[ \left( \frac{z_{1:d} - z_J}{\mu_{1:d}} \right)^{-\alpha(d-1)} \right]
&= \e\left[ \left( \frac{z_{1:d} - z_J }{d-1} \right)^{-\alpha(d-1)}  \right] \cdot \left( \frac{ d-1 }{ \mu_{1:d} } \right)^{-\alpha (d-1) } \\
&\leq \e\left[ \prod_{j \in [d] \setminus\{J\}} z_j^{-\alpha} \right] \cdot  \left( \frac{ d-1 }{ d \underline \mu  } \right)^{-\alpha (d-1) } \\
&\leq (M_\alpha)^{d-1} \cdot \underline \mu^{\alpha (d-1) }
\Bigl(\frac{d}{d-1}\Bigr)^{\alpha(d-1)}
\\
&\leq \beta^{ - \alpha (d-1) }e^\alpha.\qedhere
\end{align*}
\end{proof}

\begin{lemma}\label{lem:tdto1}
Let independent random variables 
$z_j\ge0$ satisfy the upper and lower bound mean conditions
in~\eqref{eq:boundedmean}
and the variance bounds in~\eqref{eq:boundedvar}.
Let the index $J$ be chosen
according to equation~\eqref{eq:rindex} independently of $\bsz$.
If $T_d$ is defined by equation~\eqref{eq:deftd}
with $p<0$ then $\e( |T_d-1|)\to0$ as $d\to\infty$.
\end{lemma}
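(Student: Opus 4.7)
The plan is to prove $T_d \to 1$ in $L^1$ via Vitali's convergence theorem, which requires both convergence in probability and uniform integrability. Setting $A_d = (z_{1:d}-z_J)/\mu_{1:d}$ so that $T_d = A_d^p$, I would first handle the probabilistic convergence: compute $\e(A_d) = 1 - \e(z_J)/\mu_{1:d}$, noting that $\e(z_J) = \sum_{j=1}^d (\sigma_j^2/\sigma_{1:d}^2)\mu_j \le \omu$ is bounded by \eqref{eq:boundedmean} while $\mu_{1:d}\ge d\umu$, so $\e(A_d)\to 1$. Conditioning on $J$ and using independence of the $z_j$ gives $\var(z_{1:d}-z_J) = \sigma_{1:d}^2 - \e(\sigma_J^2) + \var(\mu_J)$, which is $O(d)$ by the mean and variance bounds \eqref{eq:boundedmean} and \eqref{eq:boundedvar}. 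Dividing by $(\mu_{1:d})^2 = \Theta(d^2)$ gives $\var(A_d) = O(1/d)$, so $A_d \to 1$ in $L^2$; continuity of $x \mapsto x^p$ on $(0,\infty)$ then yields $T_d\to 1$ in probability by the continuous mapping theorem.

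The harder ingredient is uniform integrability, which I would secure by showing $\sup_d \e(T_d^2) = \sup_d \e(A_d^{2p}) < \infty$. This is the main obstacle: because $p<0$, the function $x\mapsto x^{2p}$ is unbounded near $0$, and Proposition~\ref{prop:boundSJmoments} supplies a bound only on the much more negative moment $\e(A_d^{-\alpha(d-1)})$. The idea is to interpolate via Jensen's inequality. For $d$ large enough that $s := -\alpha(d-1)/(2p) \ge 1$ (which holds once $d \ge 1 + 2|p|/\alpha$), the function $\phi(y)=y^s$ is convex on $(0,\infty)$, and applying Jensen to $X = A_d^{2p}$ gives
\[
\e(A_d^{2p})^s \;\le\; \e(\phi(X)) \;=\; \e(A_d^{-\alpha(d-1)}) \;\le\; \beta^{-\alpha(d-1)}\,e^{\alpha}
\]
by Proposition~\ref{prop:boundSJmoments}. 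Taking the $s$-th root and simplifying gives $\e(T_d^2) \le \beta^{2p} e^{-2p/(d-1)}$, which stays bounded as $d\to\infty$.

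With $\{T_d\}$ uniformly integrable and $T_d\to 1$ in probability, Vitali's theorem concludes that $T_d\to 1$ in $L^1$, that is $\e(|T_d-1|)\to 0$. The load-bearing step is the Jensen interpolation above: it converts Proposition~\ref{prop:boundSJmoments}'s bound on a very negative moment into a $d$-uniform bound on the comparatively mild moment $\e(A_d^{2p})$, which is precisely what is needed for uniform integrability; everything else is a routine law-of-large-numbers calculation for sums of independent terms with bounded means and variances.
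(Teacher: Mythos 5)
Your proof is correct, and it follows the same overall skeleton as the paper's argument (convergence of $T_d$ to $1$ in probability plus uniform integrability, combined via Vitali), but it executes the key uniform-integrability step differently. The paper bounds the tails directly: for $d>\lceil 1-2p/\alpha\rceil$ it estimates $\int_M^\infty \Pr(|T_d|\ge z)\rd z$ by rewriting the event $\{T_d\ge z\}$ in terms of the $-\alpha(d-1)$ power, applying Markov's inequality together with Proposition~\ref{prop:boundSJmoments}, and integrating the resulting power of $z$, choosing $M$ large to make the tail at most $\epsilon$ uniformly in $d$. You instead interpolate moments: with $s=-\alpha(d-1)/(2p)\ge 1$, Lyapunov/Jensen gives $\e(A_d^{2p})\le\bigl(\e(A_d^{-\alpha(d-1)})\bigr)^{1/s}\le\beta^{2p}e^{-2p/(d-1)}$, so $\{T_d\}$ is bounded in $L^2$ and hence uniformly integrable; your exponent algebra checks out. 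Both routes lean on exactly the same input, Proposition~\ref{prop:boundSJmoments} (and therefore, like the paper's own proof, implicitly use the negative moment condition~\eqref{eq:negmoment}, which is not listed in the lemma's hypotheses but holds in the context of Theorem~\ref{thm:limisone}). What your version buys is a clean, quantitative second-moment bound with no $\epsilon$--$M$ bookkeeping, and in fact slightly more than is needed ($L^2$-boundedness rather than bare uniform integrability); the paper's tail estimate is more hands-on but conceptually equivalent. Your first step (showing $\e(A_d)\to1$ and $\var(A_d)=O(1/d)$, hence $A_d\to1$ in $L^2$ and then $T_d\to1$ in probability by continuous mapping) is a minor variant of the paper's splitting of $(z_{1:d}-z_J)/\mu_{1:d}$ into two terms and is equally valid.
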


\begin{proof}
We will show that $T_d$ converges to $1$ in probability and
that $T_d$ is uniformly integrable for large enough $d$. Then the result follows
by the Vitali convergence theorem.

Writing
$$
\frac{z_{1:d}-z_J}{\mu_{1:d}}
=\frac{z_{1:d}}{\mu_{1:d}}-\frac{z_J}{\mu_{1:d}}
$$
we see that the first term converges to one in probability (by our variance assumptions)
and the second term
converges to zero in probability
by our assumptions on $\mu_j$.
Therefore $(z_{1:d}-z_J)/\mu_{1:d}$ converges to one in probability
and then, by continuity $T_d$
converges to one in probability
as $d\to\infty$.

Now, we prove that $T_d$ is uniformly integrable for all $d >\lceil 1 - 2 p/ \alpha \rceil$, so that $1 + \alpha(d-1)/ p < -1$. Consider any $\epsilon > 0$, and select any value $M > \min( \beta^p, \beta^{2p} e^\alpha / \epsilon )$. Noting that $x \mapsto x^{-\alpha (d-1)/p}$ is a monotonically increasing function and then using Proposition~\ref{prop:boundSJmoments},

\begin{align*}
\int_M^\infty \Pr(|T_d| \geq z) \rd z 
&= \int_M^\infty \Pr\left( \left( \frac{ z_{1:d} - z_J }{\mu_{1:d}} \right)^p \geq z \right) \rd z \\
&= \int_M^\infty \Pr\biggl( \left( \frac{ z_{1:d} - z_J }{\mu_{1:d}} \right)^{-\alpha (d-1)} \geq z^{-\alpha (d-1)/p} \biggr) \rd z \\
&\leq \int_M^\infty \e\biggl[ \left( \frac{ z_{1:d} - z_J }{\mu_{1:d}} \right)^{-\alpha (d-1)} \biggr] z^{\alpha (d-1)/p} \rd z \\
&\leq \beta^{-\alpha(d-1)} e^\alpha \int_M^\infty z^{\alpha (d-1)/p} \rd z \\
&= \beta^{-\alpha(d-1)} e^\alpha 
\cdot \frac{ M^{1+\alpha(d-1)/p} }{ - (1 + \alpha (d-1) / p)}\\
&\leq \beta^p e^\alpha \cdot (M \beta^{-p})^{1+\alpha(d-1)/p}\\
&\leq \beta^p e^{\alpha} (M\beta^{-p})^{-1}  \tag{as $M\ \beta^{-p} \geq 1$ and $1+\alpha(d-1)/p \leq -1$} \\
&= \beta^{2p} e^{\alpha} / M \\
&\leq \epsilon
\end{align*}
because $M \geq \beta^{2p} e^\alpha / \epsilon$.

Therefore
$\int_M^\infty \Pr(T_d| \geq z) \rd z 
\leq  \epsilon$.
It follows that $T_d$ is uniformly integrable for all $d\geq \lceil 1 - 2p / \alpha \rceil$, which completes our claim.
\end{proof}

\begin{proposition}\label{prop:condlipschitz}
Let $h:\real^2\to\real$ be a function 
where $h(\cdot,z)$ is an $M(z)$-Lipschitz function
for every $z$.
If $x$ and $z$ are independent random variables, then
$$ \e[ \var( h(x,z) \giv z) ] \leq \e[ M(z)^2] \cdot \var(x).$$
\end{proposition}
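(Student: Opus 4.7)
The plan is to apply the standard ``i.i.d.\ copy'' identity for variance conditionally on $z$, and then use the Lipschitz hypothesis pointwise to bound the integrand before taking an outer expectation. Concretely, I will introduce an independent copy $x'$ of $x$, also chosen to be independent of $z$, and use the identity
$$
\var(h(x,z)\giv z) \;=\; \tfrac12\,\e\bigl((h(x,z)-h(x',z))^2\bgiv z\bigr),
$$
which holds because conditional on $z$, the variables $x$ and $x'$ are i.i.d.\ (independence of each from $z$ lets me condition freely).

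Next, the Lipschitz hypothesis gives the pointwise bound $(h(x,z)-h(x',z))^2 \le M(z)^2(x-x')^2$ for every realization. Substituting into the display above yields
$$
\var(h(x,z)\giv z) \;\le\; \tfrac12\,M(z)^2\,\e\bigl((x-x')^2\bgiv z\bigr) \;=\; \tfrac12\,M(z)^2\,\e\bigl((x-x')^2\bigr) \;=\; M(z)^2\var(x),
$$
where the middle equality uses that $x$ and $x'$ are independent of $z$, and the last uses the same i.i.d.-copy identity applied unconditionally to $x$.

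Finally, taking expectation over $z$ and noting that $\var(x)$ is a constant gives
$$
\e\bigl[\var(h(x,z)\giv z)\bigr] \;\le\; \e\bigl[M(z)^2\bigr]\cdot\var(x),
$$
as claimed. There is no real obstacle here: the proof is a direct two-line consequence of the variance-via-i.i.d.-copies identity together with the Lipschitz bound, with independence of $x$ and $z$ ensuring the conditional variance of $x$ coincides with its unconditional variance.
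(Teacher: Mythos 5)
Your proof is correct and follows essentially the same route as the paper: bound the conditional variance $\var(h(x,z)\giv z)$ by $M(z)^2\var(x)$ using the Lipschitz property together with independence of $x$ and $z$, then take the expectation over $z$. The only difference is that you spell out the intermediate bound via the i.i.d.-copy identity $\var(h(x,z)\giv z)=\tfrac12\e\bigl((h(x,z)-h(x',z))^2\bgiv z\bigr)$, whereas the paper invokes the standard fact that a Lipschitz map scales variance by at most the squared Lipschitz constant without proof.
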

\begin{proof}
First $\var( h(x,z)\giv z)\le M(z)^2\var(x\giv z)=M(z)^2\var(x)$ 
by independence of $x$ and $z$.  The result follows by
taking the expectation over $z$.
\end{proof}

\begin{lemma}\label{lem:sobindexupper}
Let independent $z_j\ge0$ satisfy 
the upper and lower mean bounds in \eqref{eq:boundedmean}
and the upper and lower variance bounds in \eqref{eq:boundedvar}.
Let $f(\bsz) = (z_{1:d}/\mu_{1:d})^p$ for non-zero $p<1$.
Then
$$ 
\limsup_{d \rightarrow \infty} \frac{ \sum_{j=1}^d \olt_j^2 }{ p^2 \cdot \frac{\sigma_{1:d}^2 }{ (\mu_{1:d})^2 } }
\leq 1.
$$
\end{lemma}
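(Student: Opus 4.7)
The plan is to apply Proposition~\ref{prop:condlipschitz} to the conditional variance representation $\olt_j^2 = \e[\var(f(\bsz)\giv\bsz_{-j})]$, and then identify the resulting sum as an expectation that can be controlled by Lemma~\ref{lem:tdto1}. Concretely, for each $j\in[d]$, fix $\bsz_{-j}$ and write $S=\sum_{k\ne j}z_k$, so that as a function of $z_j$ alone,
$$
f(\bsz)=\Bigl(\frac{S+z_j}{\mu_{1:d}}\Bigr)^p.
$$
Its derivative with respect to $z_j$ has magnitude $(|p|/\mu_{1:d})((S+z_j)/\mu_{1:d})^{p-1}$; since $p<1$ forces $p-1<0$, this derivative is maximized at $z_j=0$, so the map is Lipschitz in $z_j$ with constant $M(\bsz_{-j})=(|p|/\mu_{1:d})(S/\mu_{1:d})^{p-1}$.

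Applying Proposition~\ref{prop:condlipschitz} yields
$$
\olt_j^2\le \e[M(\bsz_{-j})^2]\,\sigma_j^2
=\frac{p^2\sigma_j^2}{(\mu_{1:d})^2}\,
\e\Bigl[\Bigl(\frac{z_{1:d}-z_j}{\mu_{1:d}}\Bigr)^{2(p-1)}\Bigr].
$$
Summing over $j$ and dividing by $p^2\sigma_{1:d}^2/(\mu_{1:d})^2$ produces a convex combination
$$
\frac{\sum_{j=1}^d \olt_j^2}{p^2\sigma_{1:d}^2/(\mu_{1:d})^2}
\le\sum_{j=1}^d\frac{\sigma_j^2}{\sigma_{1:d}^2}\,
\e\Bigl[\Bigl(\frac{z_{1:d}-z_j}{\mu_{1:d}}\Bigr)^{2(p-1)}\Bigr]
=\e\Bigl[\Bigl(\frac{z_{1:d}-z_J}{\mu_{1:d}}\Bigr)^{2(p-1)}\Bigr],
$$
where $J$ is the random index from~\eqref{eq:rindex}, independent of $\bsz$. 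Since $p<1$ gives $2(p-1)<0$, the right-hand side is exactly an instance of the quantity analyzed in Lemma~\ref{lem:tdto1}, with the exponent $p$ there replaced by $2(p-1)$. That lemma supplies $L^1$ convergence to~$1$, hence convergence of the expectation to~$1$, which gives the desired $\limsup\le 1$.

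The main obstacle is the Lipschitz step: when some $z_k$ can vanish with positive probability, $M(\bsz_{-j})$ is only finite on the event $\{S>0\}$, so one must check that the constant is integrable enough for Proposition~\ref{prop:condlipschitz} to apply and that the resulting negative-moment bound does not blow up. The negative moment assumption~\eqref{eq:negmoment}, combined with the uniform integrability argument already used in the proof of Lemma~\ref{lem:tdto1}, is exactly what handles this: the same reasoning that made $T_d$ uniformly integrable for large enough $d$ also justifies taking the expectation of $((z_{1:d}-z_j)/\mu_{1:d})^{2(p-1)}$ here, since the random index $J$ from~\eqref{eq:rindex} only redistributes weight across the $j$'s and does not alter the negative-moment control provided by Proposition~\ref{prop:boundSJmoments}.
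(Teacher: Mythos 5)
Your proposal is correct and follows essentially the same route as the paper's own proof: the conditional Lipschitz bound $(|p|/\mu_{1:d})((z_{1:d}-z_j)/\mu_{1:d})^{p-1}$ combined with Proposition~\ref{prop:condlipschitz}, rewriting the normalized sum as a convex combination equal to $\e(T_d)$ with exponent $2p-2<0$, and concluding via Lemma~\ref{lem:tdto1}. Your closing remark about integrability when some $z_k$ may vanish matches the paper's implicit reliance on the negative moment condition~\eqref{eq:negmoment} inside Lemma~\ref{lem:tdto1}, so no further change is needed.
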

\begin{proof} 
For all $j\in[d]$,
$$0\le
\frac{\partial}{\partial z_j}f(\bsz) =
\frac{p}{\mu_{1:d}}\Bigl(\frac{z_{1:d}}{\mu_{1:d}}\Bigr)^{p-1}
\le\frac{p}{\mu_{1:d}}\Bigl(\frac{z_{1:d}-z_j}{\mu_{1:d}}\Bigr)^{p-1}
$$
which we can use as a conditional Lipschitz bound
independent of $z_j$.
Then using the identity~\eqref{eq:econdvar} and Proposition~\ref{prop:condlipschitz}
\begin{align*}
\olt^2_j &= \e\Bigl(\var\Bigl( \Bigl(\frac{z_{1:d}}{\mu_{1:d}}\Bigr)^p\giv z_{-j}\Bigr)\Bigr)
 \le\frac{\sigma^2_jp^2}{(\mu_{1:d})^2}
\e\Bigl(\Bigl(\frac{z_{1:d}-z_j}{\mu_{1:d}}\Bigr)^{2p-2}\Bigr).
\end{align*}

Now
\begin{align*}
\frac1{p^2\frac{\sigma^2_{1:d}}
{(\mu_{1:d})^2}}
\sum_{j=1}^d\olt^2_j
&\le
\sum_{j=1}^d\frac{\sigma^2_j}{\sigma^2_{1:d}}
\e\Bigl(\Bigl(\frac{z_{1:d}-z_j}{\mu_{1:d}}\Bigr)^{2p-2}\Bigr)
\end{align*}
which we recognize as $\e(T_d)$ defining $T_d$
as at~\eqref{eq:deftd} but with exponent $2p-2<0$.
Then Lemma~\ref{lem:tdto1} finishes the proof.
\end{proof}

\subsection{Variance lower bounds}\label{sec:varlower}

In Section~\ref{sec:sobolupper} we found an upper bound
for a normalized upper bound of Sobol' indices.
Here we get a lower bound for the variance
of the radial basis functions.

We will use the following inequality.
If $Y_d$ for $d\ge1$ are random variables
that have finite variances and converge
in distribution to a random variable $Y$,
then
\begin{align}\label{eq:liminflowerbound}
\liminf_{d \rightarrow \infty} \var(Y_d) \geq \var(Y).
\end{align}
\begin{lemma}\label{lem:varlowerbd}
Let independent $z_j\ge0$ satisfy 
the upper and lower mean bounds in \eqref{eq:boundedmean}
and the upper and lower variance bounds in \eqref{eq:boundedvar}.
Let $f(\bsz) = (z_{1:d}/\mu_{1:d})^p$ for non-zero $p<1$.
Then
$$ \liminf_{d \rightarrow \infty} \var(f(\bsz)) \cdot \left( p^2 \cdot \frac{\sigma_{1:d}^2}{ (\mu_{1:d})^2 } \right)^{-1} \geq 1.$$
\end{lemma}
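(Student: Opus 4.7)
The plan is to invoke the lower-semicontinuity inequality~\eqref{eq:liminflowerbound} for a suitable normalization of $f(\bsz)$. Let $s_d:=\sigma_{1:d}/\mu_{1:d}$, $W_d:=z_{1:d}/\mu_{1:d}-1$, and $V_d:=W_d/s_d=(z_{1:d}-\mu_{1:d})/\sigma_{1:d}$. I would define
$$Y_d := \frac{f(\bsz)-1}{p s_d},$$
so that shift-invariance of variance gives $\var(Y_d)=\var(f(\bsz))/(p^2 s_d^2)$, which is exactly the quantity whose $\liminf$ we want bounded below by $1$. It thus suffices to exhibit $Y_d\tod \dnorm(0,1)$ and to check that each $Y_d$ has finite variance.

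First I would establish $V_d\tod \dnorm(0,1)$ via the Lindeberg CLT. The triangular array $(z_j-\mu_j)/\sigma_{1:d}$ is independent with mean zero and total variance one, and under the variance bound $\sigma_{1:d}^2\to\infty$, so $\max_j \sigma_j^2/\sigma_{1:d}^2\to 0$. The Lindeberg condition would be verified by combining Markov's inequality (exploiting $z_j\ge 0$ and $\mu_j\le\overline{\mu}$ to control the right tail of $z_j-\mu_j$) with the negative moment assumption~\eqref{eq:negmoment} and $z_j-\mu_j\ge-\overline{\mu}$ on the left tail.

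Next I would write $Y_d=V_d\cdot R_d$ with $R_d:=((1+s_d V_d)^p-1)/(p s_d V_d)$. Since $s_d=\Theta(d^{-1/2})\to 0$ and $V_d$ is tight by the CLT, $s_d V_d\to 0$ in probability. The map $x\mapsto((1+x)^p-1)/(px)$ extends continuously to the value $1$ at the origin, so $R_d\to 1$ in probability and Slutsky's theorem gives $Y_d\tod \dnorm(0,1)$. Finite variance of $Y_d$ follows, for $p<0$, from Proposition~\ref{prop:constUpperBound} applied with exponent $2p$, which yields $\e[f(\bsz)^2]\le\beta^{2p}<\infty$ as soon as $d\ge -2p/\alpha$; for $0<p<1$ it follows from boundedness of the second moment of $z_{1:d}/\mu_{1:d}$. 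Inequality~\eqref{eq:liminflowerbound} then delivers $\liminf_{d}\var(Y_d)\ge\var(\dnorm(0,1))=1$, which is the claim.

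The hard step is verifying the Lindeberg condition. Under bounded mean and variance plus only a negative moment bound, no uniform $(2+\delta)$-moment on $z_j$ is assumed, so the Lyapunov shortcut is unavailable; the tail analysis must exploit the specific non-negativity of $z_j$ and the lower bound on the mean to squeeze out uniform integrability of $(z_j-\mu_j)^2/\sigma_j^2$. Everything else (Slutsky's theorem, the integrability for $p<0$ via Proposition~\ref{prop:constUpperBound}, and the final semicontinuity step) is routine once the CLT is in hand.
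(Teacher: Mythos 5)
Your skeleton is the same as the paper's: center and scale $f(\bsz)-1$, prove convergence in distribution to a centered Gaussian, and finish with the semicontinuity inequality \eqref{eq:liminflowerbound}. Your factorization $Y_d=V_dR_d$ with $R_d\to1$ in probability is an algebraic rewriting of the paper's mean value theorem step (there $p\theta^{p-1}\to p$ in probability), and the Slutsky plus \eqref{eq:liminflowerbound} ending is identical. The finite-variance worry is not needed, since terms with $\var(Y_d)=\infty$ cannot lower the liminf; note also that Proposition~\ref{prop:constUpperBound} uses the negative moment condition \eqref{eq:negmoment}, which is not among this lemma's stated hypotheses.

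The genuine gap is precisely the step you defer as ``the hard step.'' The Lindeberg condition for the array $(z_j-\mu_j)/\sigma_{1:d}$ cannot be extracted from nonnegativity, the mean bounds \eqref{eq:boundedmean}, the variance bounds \eqref{eq:boundedvar} and the negative moment bound \eqref{eq:negmoment}: these control the left tail of $z_j-\mu_j$ (it is bounded below by $-\omu$) and give only a second moment bound on the right tail, whereas Lindeberg requires uniform integrability of the squares $(z_j-\mu_j)^2$, which a variance bound does not supply. Concretely, let $z_j=1/2$ with probability $1-4^{-j}$ and $z_j=1/2+2^j$ with probability $4^{-j}$. Then $\mu_j\in(1/2,1]$, $\sigma_j^2\in[3/4,1)$ and $\e(z_j^{-\alpha})\le 2^\alpha$, so every hypothesis holds; yet by Borel--Cantelli only finitely many of the rare large values ever occur, so $z_{1:d}-\mu_{1:d}$ converges almost surely to a finite random variable, $(z_{1:d}-\mu_{1:d})/\sigma_{1:d}\to0$ in probability rather than to $\dnorm(0,1)$, and the Lindeberg sum tends to $1$. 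Hence no tail argument of the kind you sketch can deliver the CLT you need; in this example one even finds $\var(f(\bsz))=O(\log d/d^2)$ while $p^2\sigma_{1:d}^2/(\mu_{1:d})^2$ is of order $1/d$, so the asserted liminf bound itself fails, and an additional hypothesis (identically distributed $z_j$, a uniform $2+\delta$ moment bound, or directly a Lindeberg/uniform-integrability condition on the $(z_j-\mu_j)^2$) is what is really required. To be fair, the paper's own proof invokes ``the central limit theorem'' for this non-identically-distributed array without verifying any such condition, so you have put your finger on a real issue; but your proposed way of closing it does not work.
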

\begin{proof} 
From the mean value theorem
\begin{align*} 
\frac{\mu_{1:d}}{\sigma^2_{1:d}} (f(\bsz) - 1)
&= \frac{\mu_{1:d}}{\sigma_{1:d}^2} \left[ \left( \frac{z_{1:d}}{\mu_{1:d}} \right)^p - 1 \right] 
= p \theta^{p-1} \cdot \frac{z_{1:d}-\mu_{1:d}}{\sigma^2_{1:d}} 
\end{align*}
for some $\theta$ between 1 and 
$z_{1:d}/\mu_{1:d}$. That ratio converges to 1 in probability
as $d\to\infty$ and so $\theta\to1$ in probability.
Then by the continuous mapping theorem,
$p \theta^{p-1}$ converges to $p$ in probability.

Next  $(z_{1:d}-\mu_{1:d})/\sigma^2_{1:d}
\tod\dnorm(0,1)$ by the central limit theorem
and so using Slutsky's theorem
$$\frac{\mu_{1:d}}{\sigma^2_{1:d}}( f(\bsz)-1)
\tod\dnorm(0,p^2).$$
Finally, from equation~\eqref{eq:liminflowerbound}
\begin{align*}
\liminf_{d \rightarrow \infty} \frac{ \var(f(\bsz)) }{ p^2 \cdot \frac{\sigma_{1:d}^2}{(\mu_{1:d})^2 } } 
&=
\liminf_{d \rightarrow \infty} \frac{1}{p^2} \var\left( \frac{\mu_{1:d}}{\sigma_{1:d}} [f(z_{1:d})-1] \right)\\
&\geq
\frac{1}{p^2} \cdot \var( \dnorm(0,p))
=1.\qedhere\end{align*}
\end{proof}
\end{document}